\numberwithin{equation}{section}
\theoremstyle{plain}
\newtheorem{theorem}{Theorem}[section]
\newtheorem{lemma}[theorem]{Lemma}
\newtheorem{proposition}[theorem]{Proposition}
\newtheorem{corollary}[theorem]{Corollary}
\theoremstyle{definition}
\newtheorem{definition}[theorem]{Definition}
\newtheorem{example}[theorem]{Example}
\newtheorem{remark}[theorem]{Remark}
\DeclareMathOperator{\Id}{Id}
\newcommand{\R}{\mathbb{R}}
\newcommand{\C}{\mathbb{C}}
\newcommand{\Z}{\mathbb{Z}}
\newcommand{\hm}{g}
\newcommand{\nm}{g'}
\newcommand{\tm}{g''}
\newcommand{\del}{\partial}
\newcommand{\holga}{\Gamma^{\mathrm{hol}}}
\newcommand{\seqref}[1]{{\it (\ref{#1})}}
\newcommand{\remarksymbol}{\hfill $\diamond$}
\title[Submanifolds of Wick-related spaces]{On holomorphic Riemannian geometry and submanifolds of Wick-related spaces}
\author{Victor Pessers and Joeri Van der Veken}
\address{
 KU Leuven, Department of Mathematics, 
 Celestijnenlaan 200B - Box 2400,
 3001 Leuven, Belgium}
\email{victor.pessers@wis.kuleuven.be}
\email{joeri.vanderveken@wis.kuleuven.be}
\keywords{Wick-related spaces, complex and holomorphic Riemannian geometry, anti-K\"ahler geometry, minimal submanifolds, parallel submanifolds}
\subjclass[2010]{53B25, 53C40, 53C56, 53C50}
\thanks{This research was partially supported by the Belgian Interuniversity Attraction Pole P07/18 (Dygest).}
\begin{document}

\begin{abstract}
In this article we show how holomorphic Riemannian geometry can be used to relate certain submanifolds in one pseudo-Riemannian space to submanifolds with corresponding geometric properties in other spaces. In order to do so, we shall first rephrase and extend some background theory on holomorphic Riemannian manifolds, which is essential for the later application of the presented method.
\end{abstract}

\maketitle

\section{Introduction}

In this article, we show how certain problems in (pseudo-)Riemannian submanifold theory, that are situated in different ambient spaces, can be related to each other by translating the problem to an encompassing holomorphic Riemannian space. Our approach seems new in the area of submanifold theory, although it incorporates several existing insights, such as the theory on analytic continuation, complex Riemannian geometry and real slices, as well as the method of Wick rotations, which is mainly used in physics.

The relation between pseudo-Riemannian geometry and complex analysis can be traced back to the very birth of pseudo-Riemannian geometry. In the early publications on Lorentzian geometry by Poincar\'e and Minkowski (cf. \cite{Po,Mi}), the fourth coordinate of space-time was represented as $it$ (or $ict$), so that space-time was essentially modelled as $\R^3\times i\R$, where the standard complex bilinear form played the role of metric. Likely due to a later reformulation by Minkowksi himself, this point of view soon fell in abeyance in favor of the nowadays more common presentation in terms of an indefinite real inner product. Admittedly, as long as ones attention is kept restricted to four-dimensional Minkowski space alone, the use of complex numbers to deal with the signature bears little advantage.

This relationship between space-time geometry and complex numbers received renewed attention, when it was shown by Wick how problems from the Lorentzian setting are turned into problems in a Euclidean setting, after a so-called Wick rotation is applied on the time coordinate (cf. \cite{Wi}). This method of Wick rotations lays at the basis of the later theory on Euclidean quantum gravity, developed by Hawking and Gibbons among others (cf. \cite{Ha}), but also in other areas of theoretical physics it remained a valuable tool ever since.

Despite that the concept of Wick rotations is known by physicists for quite some time already, there are still many research domains where such insights have not been fully exploited yet. With this article, we hope to demonstrate that also for the particular area of submanifold theory, one may benefit from taking a complex viewpoint to problems about submanifolds of pseudo-Riemannian spaces. It should be noted that the subject of complex Riemannian manifolds, first introduced in an also physically motivated article by LeBrun (cf. \cite{Le}), has appeared in several articles on submanifold theory (e.g. \cite{BoFrVo,Sl}). 

The structure of this article is as follows. In Sections \ref{SC Preliminaries}, \ref{SC Complexifications} and \ref{SC Hol Riem submanifold theory}, we rephrase and extend the theory of holomorphic Riemannian manifolds from respectively a {\em complex-linear} perspective, a complex analytic perspective and subsequently the viewpoint of submanifold theory. This theory is then used in the \ref{SC applications}th and final section of this article, where we demonstrate by three different examples our method to relate certain kinds of submanifolds in one pseudo-Riemannian space, to submanifolds with corresponding geometric properties in other so-called Wick-related spaces.

\section{Preliminaries on complex Riemannian geometry}\label{SC Preliminaries}

\subsection{Complex vector spaces with holomorphic inner product}

\paragraph{}
In the following, we will assume all vector spaces to be finite dimensional. We adopt the following two definitions on subspaces of a complex vector space (cf. \cite{Bo}).

\begin{definition}
Let $V$ be a complex vector space. We call a real linear subspace $W\subset V$ {\em totally real} if $W\cap i\,W = \{0\}$ and {\em generic} if $W+ i\,W = V$.
\end{definition}

Remark that if $W$ is both generic and totally real, then its real dimension equals the complex dimension of $V$. 

\begin{definition}
A {\em holomorphic inner product space} is a complex vector space $V$ equipped with a non-degenerate complex bilinear form $g$.
\end{definition}

Given a holomorphic inner product on $V$ one can always choose an orthonormal basis. This means that any $n$-dimensional holomorphic inner product space can be identified with $\C^n$, equipped with the standard holomorphic inner product
\begin{equation}\label{EQ standard inner product}
g_0( X,Y ) = \ X_1 Y_1+\ldots+X_n Y_n,
\end{equation}
where $X=(X_1,\ldots,X_n)$ and $Y=(Y_1,\ldots,Y_n)$. The name `holomorphic inner product' comes from the fact that, unlike the more used sesquilinear inner product, the above product is a holomorphic function from $\C^n\times \C^n$ to $\C$. 

We note that, given a complex-linear operator $A:V \to V$, there does not necessarily exist a basis of $V$ consisting of eigenvectors of $A$, not even when $A$ is symmetric with respect to a holomorphic inner product on $V$. However, if there exists a basis of eigenvectors of a symmetric operator $A$, there also exists an orthonormal basis of eigenvectors of $A$.

\begin{definition}\label{DF real slice hol in prod space}
Given a holomorphic inner product space $(V,g)$, we use the term {\em real slice} to denote a real linear subspace $W\subset V$, for which $g|_W$ is non-degenerate and real valued, i.e., $g(X,Y)\in \R$ for all $X,Y\in W$.
\end{definition}

\begin{example}\label{EX pseudo-Euclidean}
  Consider the standard holomorphic inner product space $(\C^n,g_0)$ and let $\{e_1,\ldots,e_n\}$ be the standard basis. The real linear subspace
  \begin{equation}\label{EQ pseudo-Euclidean} 
  \R^n_k := \mathrm{span}_{\R}\{ie_1,\ldots,ie_k,e_{k+1},\ldots,e_n\}  
  \end{equation}
  is a real slice. Indeed, the restriction of the inner product $g_0|_{\R^n_k}$ is real valued, in particular, it is the standard pseudo-Euclidean metric of signature $k$ on $\R^n$, as the notation $\R^n_k$ suggests.
  If $z_j = x_j + iy_j$ are the standard coordinates on $\C^n$, the space $\R^n_k$ is given by $x_1 = \ldots = x_k = y_{k+1} = \ldots = y_n = 0$. Hence, $(y_1,\ldots,y_k,x_{k+1},\ldots,x_n)$ are natural real coordinates on $\R^n_k$.  
\end{example}
  
  Since any $n$-dimensional holomorphic inner product space $(V,g)$ can be identified with $(\C^n,g_0)$ by choosing an orthonormal basis, we can always find a real slice of any signature $k$, with $0 \leq k \leq n$ in $V$, namely the one corresponding to $\R_k^n \subset \C^n$. One easily verifies that all real slice of $V$ come from such an identification, in particular, they are all related by the action of the complex orthogonal group $\mathrm{O}(n,\C)$ on $V$.

\begin{proposition}\label{ST real slices hol in prod space}
The real slices of a holomorphic inner product space are totally real subspaces.
\end{proposition}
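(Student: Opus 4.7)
The plan is to unpack both definitions and extract a contradiction from assuming a nonzero element sits in the intersection $W \cap iW$. So I would take an arbitrary $X \in W \cap iW$, and write $X = iY$ for some $Y \in W$, with the goal of showing $Y = 0$.

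The key observation is that the real-valuedness of $g|_W$ interacts rigidly with the factor of $i$ produced by $\C$-bilinearity. Concretely, for every $Z \in W$, I would compute
\begin{equation*}
g(X,Z) = g(iY,Z) = i\,g(Y,Z).
\end{equation*}
Now $X, Y, Z$ all lie in $W$, so both $g(X,Z)$ and $g(Y,Z)$ are real. The equality $g(X,Z) = i\,g(Y,Z)$ then forces $g(Y,Z) = 0$, since otherwise the right-hand side would have nonzero imaginary part.

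Since this holds for all $Z \in W$, non-degeneracy of $g|_W$ (which is part of the real slice hypothesis) yields $Y = 0$, and hence $X = iY = 0$. This shows $W \cap iW = \{0\}$, which is exactly the defining property of a totally real subspace.

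The proof is essentially a two-line check once one sees the trick of writing $X=iY$ and exploiting that multiplication by $i$ toggles realness; I do not anticipate any real obstacle. The only mild subtlety worth stating clearly is that the non-degeneracy condition in Definition \ref{DF real slice hol in prod space} is crucial — without it, one could have $W \subset V$ with $g|_W \equiv 0$ (trivially real valued) but not totally real, so the argument genuinely uses both halves of the real slice definition.
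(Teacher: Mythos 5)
Your proof is correct and follows essentially the same argument as the paper: write the intersection element as $i$ times an element of $W$, pair against arbitrary elements of $W$, and use that both pairings are real together with non-degeneracy of $g|_W$. The only cosmetic difference is that you conclude $Y=0$ via non-degeneracy while the paper concludes $X=0$ directly; the underlying reasoning is identical.
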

\begin{proof}
Let $W$ be a real slice of a holomorphic inner product space $(V,g)$, and let $X\in W\cap i\,W$. We need to show that $X=0$. Since $X\in iW$, there exists a vector $X'\in W$ such that $X=i X'$. Then we have that for all $Y\in W$, both $\hm(X,Y)$ and $\hm(X',Y)=-i\hm(X,Y)$ are real. Hence $\hm(X,Y)=0$ for all $Y\in W$, so that the non-degeneracy of $\hm$ implies $X=0$.
\end{proof}

\subsection{Complex-linear differential geometry}

\paragraph{}

In the following, we are concerned with complex manifolds. 
Importantly, we will regard complex manifolds from the perspective of {\em complex-linear differential geometry}, which may be characterized by the fact that all linear algebra involved is assumed to have $\C$ rather than $\R$ as its underlying field. This means, for instance, that a vector basis is usually assumed to be linearly independent over $\C$, rather than over $\R$, and tensors over the holomorphic tangent bundle are assumed to have $\C$ as their underlying field.

Indeed, the tensors (of any mixed type) we will be considering, all turn out to be tensorial over $\C$ in each of their components. In particular, we have that those tensors are all {\em pure} in the sense of \cite{YaAk}. Note that such assumptions cannot be imposed in the context of Hermitian manifolds for instance, for which even the main metric tensor is not complex bilinear.

\paragraph{}

Apart from being $\C$-linear, the tensor fields we will be concerned with are usually also holomorphic. We recall the following definition of holomorphic vector fields in a general holomorphic vector bundle:

\begin{definition}
Let $\pi:V\to M$ be a holomorphic vector bundle. A section $X$ of $V$ is called a {\em holomorphic vector field}, if $X$ is holomorphic as a map between the complex manifolds $M$ and $V$. The space of holomorphic sections of $V$ is denoted by $\holga(V)$.
\end{definition}

It follows directly that a vector field is holomorphic if and only if it has holomorphic component functions with respect to any local complex coordinates. For the special case of $V = T_l^k M$, by which we mean the tensor bundle of type $(l,k)$ over the holomorphic tangent bundle $TM$, this means that a tensor field $F\in \Gamma(T_l^k M)$ is holomorphic if and only if the component functions $F_{i_1\ldots i_l}^{j_1\ldots j_k}$ are holomorphic relative to any local holomorphic coordinates $\{z_1,\ldots,z_n\}$ of $M$.

The space of holomorphic tensors fields is closed under the usual tensor operations such as addition, tensor multiplication and contraction.

\subsection{Complex and holomorphic Riemannian geometry}\label{SC complex Riemannian geometry}

\paragraph{}

In this section we will review some basic theory on complex Riemannian manifolds, which we will rephrase here from the viewpoint of the complex-linear setting.

We will define a {\em complex Riemannian manifold} as a complex manifold $M$ endowed with a {\em complex Riemannian metric}: a non-degenerate symmetric tensor $\hm\in\Gamma(T_2^0 M)$, which means that it is a complex bilinear form on every tangent space (cf. \cite{Le}).

\paragraph{}
Like in the case of ordinary (pseudo-)Riemannian manifolds, any complex Riemannian manifold can be equipped with a Levi-Civita connection $\nabla$, which is the unique affine connection that is torsion free and compatible with the complex Riemannian metric, i.e.,
\begin{align}
[X,Y] &= \nabla_X Y - \nabla_Y X,\\
X(\hm(Y,Z)) &= \hm(\nabla_X Y,Z)+\hm(Y,\nabla_X Z).
\end{align}
for all vector fields $X$,$Y$ and $Z$. In the following, we will focus our attention to complex Riemannian manifolds for which the complex metric is holomorphic, which are called {\em holomorphic Riemannian manifolds} (cf. \cite{DuZe}). 

\begin{remark}
As has been pointed out in \cite{BoFrVo}, there is a direct correspondence between $n$-dimensional holomorphic Riemannian manifolds and anti-K\"ahler manifolds (also called K\"ahler-Norden manifolds): by restricting the holomorphic metric $g$ to its real part $g'$, our manifold may be regarded as a real $2n$-dimensional pseudo-Riemannian manifold with canonical complex structure $J$ and a Levi-Civita connection $\nabla$ induced by $g'$, satisfying:
$$g'(JX,JY)=-g'(X,Y),\quad \mbox{and}\quad \nabla_X JY = J \nabla_X Y,$$
which are the characteristic properties of an anti-K\"ahler manifold. 
On the other hand, any anti-K\"ahlerian metric (also called Norden metric) can be obtained from a uniquely determined holomorphic metric in this manner. The metric $g$ may be written as a direct sum of its real and imaginary part as:
\begin{equation}
  \hm = \nm + i \tm
\end{equation}
where $\nm$ is the (primary) Norden metric and $\tm$ is the secondary Norden metric (also called the {\em twin metric}). These Norden metrics are related to each other by the relation:
\begin{equation}
\nm(X,Y) = \tm(X,JY),\quad \mbox{and}\quad \tm(X,Y) = -\nm(X,JY)
\end{equation}
for $X,Y\in T M$. \remarksymbol
\end{remark}

The easiest example of a holomorphic Riemannian manifold is the complex Euclidean $n$-space $\C^n$. The tangent space in a point being naturally identified with the vector space $\C^n$, the standard holomorphic metric on $\C^n$ is given by \eqref{EQ standard inner product}.

\section{Complexifications and real slices}\label{SC Complexifications}

\subsection{Complexifications of totally real submanifolds}

\begin{definition}
Let $Q$ be a real analytic manifold and $M$ a complex manifold. An immersion $f:Q\to M$ is called {\em totally real} at a point $p\in Q$, if $TQ|_p$ is a totally real subspace of $TM|_p$ (i.e. $TQ|_p \cap i\,TQ|_p = \{0\}$). It is called {\em generic} at a point $p\in Q$, if $TQ|_p$ is a generic subspace of $TM|_p$ (i.e. $TQ|_p + i\,TQ|_p = TM|_p$). A totally real immersion is an immersion that is totally real at all points, and likewise for a generic immersion.
\end{definition}

In the above definition and further on, we identify $TQ|_p$ and $df(TQ|_p)$. It should be noted that, although the above definition of totally real is common in the area of complex analysis (cf. \cite{Bo}), it differs from the meaning it is usually given in the area of submanifold theory, where one commonly sees the additional requirement that for each point $p\in Q$, the subspace $TQ|_p$ is perpendicular to the subspace $iTQ|_p$ (cf. \cite{ChOg}).

The following theorem will be useful for the proof of later results (cf. \cite{WhBr}).

\begin{theorem}\label{ST unextendible extension}
Let $f:Q\to M$ be a totally real analytic immersion of a real analytic manifold $Q$ into a complex manifold $M$. Then a complexification $\C Q$ of $Q$ exists, such that $Q$ is a generic totally real submanifold of $\C Q$, and a holomorphic extension $\C f:\C Q\to M$ of $f$ exists, such that $\C f|_Q = f$. Such a holomorphic extension is unique in the sense that any other holomorphic extension will locally have the same image in $M$.
\end{theorem}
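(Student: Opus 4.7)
The plan is to build $\C Q$ from a real analytic atlas on $Q$, then construct $\C f$ chart-by-chart via analytic continuation, and finally invoke the identity principle for both the gluing and the uniqueness statement.

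First I would construct $\C Q$. Choose a real analytic atlas $\{(U_\alpha,\phi_\alpha)\}$ on $Q$ with $\phi_\alpha:U_\alpha\to \R^n\subset \C^n$, where $n=\dim_\R Q$. Each transition function $\phi_{\beta\alpha}=\phi_\beta\circ\phi_\alpha^{-1}$ is real analytic on an open subset of $\R^n$, hence extends holomorphically to some open neighborhood in $\C^n$. After shrinking if necessary, the uniqueness of analytic continuation forces these extensions to satisfy the cocycle condition, so they serve as holomorphic transition maps defining a complex manifold structure on a neighborhood of $Q$ that I call $\C Q$. In each chart, $Q$ appears as $\R^n\subset\C^n$, which is a generic totally real real-linear subspace; this carries over globally to show that $Q$ is a generic totally real submanifold of $\C Q$.

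Next I would construct $\C f$. Fix $p\in Q$ and pick a holomorphic chart $\psi:V\to\C^m$ on $M$ around $f(p)$. Working in a real analytic chart $\phi_\alpha$ on $Q$, the composition $\psi\circ f\circ \phi_\alpha^{-1}$ is a tuple of real analytic functions on an open subset of $\R^n$, each of which extends uniquely to a holomorphic function on a neighborhood in $\C^n$. Transporting back via $\psi^{-1}$ yields a local holomorphic extension $\C f_\alpha:\C U_\alpha\to M$. On overlaps, two such extensions agree along the generic totally real submanifold $U_\alpha\cap U_\beta\subset Q$, so by the identity principle for holomorphic maps into a complex manifold they coincide on a connected neighborhood. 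Shrinking $\C Q$ if needed, the pieces glue to a globally defined holomorphic map $\C f:\C Q\to M$ with $\C f|_Q=f$. To see that this is actually an immersion near $Q$, I would observe that at $p\in Q$ the differential $d(\C f)_p:T_p\C Q\to T_{f(p)}M$ is the complex-linear extension of $df_p$. Writing $T_p\C Q=T_pQ\oplus i\,T_pQ$, injectivity of $df_p$ together with the totally real condition $df_p(T_pQ)\cap i\,df_p(T_pQ)=\{0\}$ force $d(\C f)_p$ to be injective, so $\C f$ is an immersion on a neighborhood of $Q$.

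For uniqueness, let $\widetilde{f}:\C Q\to M$ be any other holomorphic extension of $f$. Then $\widetilde{f}$ and $\C f$ agree on the generic totally real subset $Q$, and the identity principle again forces them to coincide on a connected open neighborhood of each point of $Q$, yielding the locally identical images claimed in the statement. The main technical obstacle is the cocycle/gluing step: one must be careful to shrink the domains of the various local extensions simultaneously so that both the transition maps of $\C Q$ and the local maps $\C f_\alpha$ are defined and coherent on a common neighborhood of $Q$. Once the uniqueness of analytic continuation is correctly applied to these overlapping extensions, the construction is forced, and the final complexification exists only as a germ around $Q$, which is exactly what the statement asserts.
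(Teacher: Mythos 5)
The paper does not actually prove this statement: it is quoted as a known result with a reference to Whitney--Bruhat, so there is no internal proof to compare against. Your sketch is, in essence, the standard Whitney--Bruhat construction, and its main lines are sound: complexify the transition functions of a real analytic atlas to build $\C Q$, extend $\psi\circ f\circ\phi_\alpha^{-1}$ holomorphically chart by chart, glue and prove uniqueness via the identity principle along the generic totally real set $Q$ (a holomorphic map vanishing on $\R^n\subset\C^n$ vanishes on a neighbourhood), and use the totally real hypothesis $df_p(T_pQ)\cap i\,df_p(T_pQ)=\{0\}$ to see that the complexified differential $v+iw\mapsto df_p(v)+i\,df_p(w)$ is injective, so $\C f$ is an immersion near $Q$. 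That last observation is exactly where the hypothesis enters and you have placed it correctly.

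Two points deserve more care than you give them. First, in the gluing step the cocycle condition is not the only issue: the space obtained by gluing the complexified charts need not be Hausdorff, and establishing that one can choose the neighbourhoods so that it is (using paracompactness of $Q$ and a locally finite refinement) is the genuinely delicate part of Whitney--Bruhat's argument; your phrase ``shrinking if necessary'' hides this. Second, the uniqueness claim in the statement compares extensions that may live on \emph{different} complexifications of $Q$, whereas you only treat a second extension $\widetilde f$ defined on the same $\C Q$; the fix is short (in local charts both extensions are the unique analytic continuation of the same real analytic map, so their local images agree), but it should be said. Neither point invalidates the approach; they are the places where a complete write-up would have to do real work.
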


The complex space $\C Q$ in the above theorem may be related to the notion of complexifications of an abstract manifold $Q$, as for instance in \cite{Ku}. However, it should be noted that in our situation the space $\C Q$ depends on the given immersion $f:Q\to M$.

\begin{example}
Consider the following immersions:
$$f_1:\R\to \C^2: t\mapsto (t,0),\quad f_2:\R\to \C^2: t\mapsto (\cosh t,\sinh t).$$ Although $f_1$ and $f_2$ have images both homeomorphic to $\R$, the following holomorphic extensions 
$$\C f_1:\C\to \C^2: z\mapsto (z,0),\quad \C f_2:\C/(2\pi i\Z)\to \C^2: z\mapsto (\cosh z,\sinh z)$$ 
have images that are not homeomorphic.
\end{example}

Theorem \ref{ST unextendible extension} implies the following result, which we will need in the rest of this article.

\begin{corollary}\thlabel{ST analyt cont vector field}
Let $\pi:V\to M$ be a holomorphic vector bundle, and let $X\in \Gamma(V|_Q)$ be a real-analytic section over a real-analytic totally real submanifold $Q\subset M$. Then there exist a complexification $\C Q\subset M$ of $Q$ and a unique holomorphic extension $\C X\in \holga(V|_{\C Q})$ of $X$, which is a holomorphic section over $\C Q$.
\end{corollary}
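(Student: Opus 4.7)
The plan is to apply Theorem \ref{ST unextendible extension} twice: first to the inclusion $Q \hookrightarrow M$ to produce the complexification $\C Q$, and then to the section $X$ itself, viewed as a real-analytic map into the complex manifold $V$. The two extensions are linked by the uniqueness clause of the theorem. So I would begin by applying Theorem \ref{ST unextendible extension} to the inclusion $\iota : Q \hookrightarrow M$, which by hypothesis is a real-analytic totally real immersion, to obtain the complexification $\C Q \subset M$ in which $Q$ sits as a generic totally real submanifold.

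Next I would regard $X$ itself as a real-analytic map $X : Q \to V$ into the complex manifold $V$ and check that it is a totally real immersion. The immersion property is immediate from $\pi \circ X = \iota$. For total reality at $p \in Q$, suppose $Y \in dX(T_pQ) \cap i\cdot dX(T_pQ)$; applying the $\C$-linear map $d\pi$ (which is $\C$-linear since $\pi$ is holomorphic) yields an element of $d\iota(T_pQ) \cap i\cdot d\iota(T_pQ) = \{0\}$, and writing $Y = dX(v)$ together with the injectivity of $d\iota = d\pi \circ dX$ forces $Y = 0$. Theorem \ref{ST unextendible extension} applied to $X$ then produces a complexification $\widetilde{\C Q}$ of $Q$ and a holomorphic extension $\C X : \widetilde{\C Q} \to V$. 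The composition $\pi \circ \C X$ is a holomorphic extension of $\iota$, so by the uniqueness clause of the theorem it locally has the same image as $\C \iota$, namely $\C Q \subset M$. This identifies $\widetilde{\C Q}$ with $\C Q$ (up to the germ ambiguity inherent in the statement of the theorem) and realizes $\C X$ as a holomorphic section of $V|_{\C Q}$ extending $X$.

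For uniqueness, if two holomorphic sections of $V|_{\C Q}$ both restrict to $X$ on $Q$, then in any local holomorphic trivialization of $V$ their difference is a tuple of holomorphic functions on $\C Q$ vanishing on the generic totally real submanifold $Q$; since locally $\C Q$ looks like $\C^n$ with $Q$ sitting as $\R^n$, the identity principle forces these functions to vanish identically, giving the desired uniqueness. The main obstacle I foresee is the identification of the two a priori distinct complexifications $\widetilde{\C Q}$ and $\C Q$, which rests on a careful use of the local uniqueness clause of Theorem \ref{ST unextendible extension}; the totally real check for $X$ is routine but essential for the theorem to apply in the second step.
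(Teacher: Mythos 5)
Your proposal is correct and takes essentially the same route as the paper: both hinge on verifying that $X:Q\to V$ is itself a totally real immersion (via the $\C$-linearity of $d\pi$ and the total reality of $Q$ in $M$), applying Theorem~\ref{ST unextendible extension}, and then using its uniqueness clause to identify $\pi\circ\C X$ with the inclusion of $\C Q$ into $M$. The only cosmetic difference is that you invoke the theorem twice (for $\iota$ and for $X$) and then match the two complexifications, whereas the paper applies it once to $X$ and concludes $\pi\circ\C X=\Id|_{\C Q}$ directly.
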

\begin{proof}

First we observe that $X:Q\to V$ is a totally real immersion, for suppose that there is a $p\in Q$ and $v,w\in TQ|_p$, such that $dX(v)=i\,dX(w)$. We have that $\pi \circ X = \Id$, hence $d\pi \circ dX = \Id$, the identity operator on $TQ$. Using furthermore that $d\pi$ is complex-linear, we see that
$$v = (d\pi\circ dX)(v) = d\pi(i\,dX(w)) = i\,(d\pi\circ dX)(w) = i\,w,$$ 
from it which follows that $v=w=0$, and thus $dX(v)=dX(w)=0$, proving that $X$ is a totally real immersion into $V$. Hence, by Theorem \ref{ST unextendible extension}, there exists a complexification $\C Q$ of $Q$ and a unique holomorphic extension $\C X: \C Q\to V$ of $X$. Now it remains to prove that $\C X$ is a section over $\C Q$, i.e. $\pi\circ \C X = \Id|_{\C Q}$. This follows from the fact that $\pi\circ \C X$ equals the identity on $Q$, so that both $\pi\circ \C X$ and $\Id|_{\C Q}$ are a holomorphic extension of $\Id|_Q$. Hence, by the unicity part of Theorem \ref{ST unextendible extension}, it follows that $\pi\circ\C X = \Id|_{\C Q}$.
\end{proof}

\subsection{Real slices}

\begin{definition}\label{DF real slice}
Given a complex manifold $M$ with complex Riemannian metric $\hm$, we call a submanifold $N\subset M$ a {\em real slice} of $(M,\hm)$, if at any point $p\in N$ we have that $TN|_p$ is a real slice of $(TM|_p,g)$ in the sense of Definition \ref{DF real slice hol in prod space}.
\end{definition}

The above definition says that the metric $\hm$ restricted to the real tangent bundle of $N$, is real valued. Hence, the restriction $\hm|_{TN}$ turns $N$ into a pseudo-Riemannian manifold.
This notion of real slices is in accordance with the concept of real slices as introduced in \cite{Le}. It is easily seen that a submanifold $N$ is a real slice if and only if the secondary Norden metric $\tm|_M$ vanishes entirely on $M$, implying that the induced complex metric $g|_{TN}$ coincides with the induced primary Norden metric $\nm|_{TN}$. 

The following two propositions are immediate consequences of Definitions \ref{DF real slice hol in prod space} and \ref{DF real slice} and Proposition \ref{ST real slices hol in prod space}.

\begin{proposition}\label{ST real slice}
A real slice of a complex Riemannian manifold is a totally real submanifold.
\end{proposition}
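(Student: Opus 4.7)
The plan is to unwind definitions pointwise and invoke Proposition \ref{ST real slices hol in prod space}. Let $N\subset M$ be a real slice of a complex Riemannian manifold $(M,g)$. To show $N$ is totally real in the sense of the definition given earlier, I need to verify that at every $p\in N$ the real tangent space $TN|_p$ satisfies $TN|_p\cap i\,TN|_p=\{0\}$ as a real linear subspace of the complex tangent space $TM|_p$.

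By Definition \ref{DF real slice}, for each $p\in N$ the subspace $TN|_p$ is a real slice of the holomorphic inner product space $(TM|_p,g_p)$ in the sense of Definition \ref{DF real slice hol in prod space}. Proposition \ref{ST real slices hol in prod space} then applies directly at the point $p$ and yields that $TN|_p$ is a totally real subspace of $TM|_p$. Since $p$ was arbitrary, the immersion $N\hookrightarrow M$ is totally real at every point, which by definition means $N$ is a totally real submanifold of $M$.

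There is no real obstacle here: the statement is a pointwise reduction, the pointwise content is exactly Proposition \ref{ST real slices hol in prod space}, and no global or analytic argument is required. The only thing to be mildly careful about is that the two notions of ``real slice'' (one for holomorphic inner product spaces, one for complex Riemannian manifolds) are glued by Definition \ref{DF real slice} in precisely the way needed to let the linear-algebraic proposition be applied fiberwise, and similarly that ``totally real'' for submanifolds is defined so as to reduce fiberwise to ``totally real'' for subspaces.
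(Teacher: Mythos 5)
Your proof is correct and follows exactly the route the paper intends: the paper states this proposition as an immediate consequence of Definitions \ref{DF real slice hol in prod space} and \ref{DF real slice} together with Proposition \ref{ST real slices hol in prod space}, which is precisely your pointwise reduction. Nothing is missing.
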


\begin{proposition}\label{ST submanifold of real slice}
Let $M$ be a complex Riemannian manifold and $P$ a real slice of $M$. Then any real submanifold $Q\subset P$ is also a real slice of $M$. 
\end{proposition}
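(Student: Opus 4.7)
The plan is to verify pointwise the two requirements in Definition \ref{DF real slice hol in prod space}. Fix an arbitrary point $p\in Q$. Since $Q$ is a submanifold of $P$, one has the inclusion of real tangent spaces $TQ|_p \subset TP|_p$, and, viewing $TP|_p$ inside $TM|_p$, also $TQ|_p \subset TM|_p$.

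For the real-valuedness condition, I would argue as follows. Because $P$ is a real slice of $M$, the bilinear form $g|_{TP|_p}$ is real-valued by Definition \ref{DF real slice}. Restricting this form further to the subspace $TQ|_p \subset TP|_p$ yields a form that is still real-valued, which handles the first half of the conclusion.

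The second requirement is that $g|_{TQ|_p}$ be non-degenerate. Here I would appeal to the (tacit) convention that a submanifold $Q$ of the pseudo-Riemannian manifold $(P, g|_{TP})$ is a genuine pseudo-Riemannian submanifold, i.e.\ the induced metric is non-degenerate on $TQ|_p$. Since the form induced from $g|_{TP|_p}$ on $TQ|_p$ is literally $g|_{TQ|_p}$, non-degeneracy transfers directly. Thus $TQ|_p$ satisfies both clauses of Definition \ref{DF real slice hol in prod space}, and because $p$ was arbitrary, $Q$ is a real slice of $M$ by Definition \ref{DF real slice}.

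I do not expect any substantive obstacle; the statement is, as the authors indicate, an immediate unpacking of the two definitions. The only point worth pinning down explicitly is the interpretation of ``real submanifold'' as a non-degenerate one in the pseudo-Riemannian sense, without which non-degeneracy of $g|_{TQ|_p}$ would fail in general (take, e.g., a lightlike curve in a Lorentzian real slice).
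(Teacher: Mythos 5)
Your proof is correct and follows the only natural route: the paper itself offers no argument, simply declaring the proposition an immediate consequence of Definitions \ref{DF real slice hol in prod space} and \ref{DF real slice}, and your pointwise unpacking is exactly that. Your closing caveat is well taken --- the statement is literally false for degenerate submanifolds (e.g.\ a lightlike curve in a Lorentzian slice), so the reading of ``real submanifold'' as non-degenerate is a genuine tacit hypothesis that the paper leaves unstated but clearly intends, as its later classification theorems restrict to non-degenerate surfaces.
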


This last proposition brings along the following useful corollary.

\begin{corollary}\label{ST intersection is real slice}
Let $M$ be a complex Riemannian manifold, $N$ a complex submanifold of $M$ and $P$ a real slice of $M$. Then the intersection $N\cap P$ is a real slice of $N$, provided it is non-empty.
\end{corollary}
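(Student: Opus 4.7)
The corollary follows almost immediately from combining Proposition \ref{ST submanifold of real slice} with Definition \ref{DF real slice}, and the plan is essentially to chain these two results together while keeping careful track of the ambient space (is it $M$ or $N$?).

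The first step is to invoke Proposition \ref{ST submanifold of real slice} to get that $N\cap P$, being a (real-analytic) real submanifold of the real slice $P$, is itself a real slice of $M$. In particular, at every point $p\in N\cap P$ the real linear subspace $T_p(N\cap P)\subset T_pM$ satisfies that $g|_{T_p(N\cap P)}$ is both real-valued and non-degenerate, by Definition \ref{DF real slice hol in prod space} applied inside $(T_pM,g)$.

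The second step is to transfer this pointwise condition from the ambient $T_pM$ down to $T_pN$. Because $N\cap P\subset N$, the real linear space $T_p(N\cap P)$ is automatically contained in $T_pN$, and the complex Riemannian metric of $N$ is merely the restriction $g|_{TN}$. The two requirements in Definition \ref{DF real slice hol in prod space}, namely real-valuedness and non-degeneracy of the bilinear form on the real subspace, are insensitive to whether one regards the subspace as sitting inside $T_pM$ or inside the smaller complex space $T_pN$. Hence $T_p(N\cap P)$ is a real slice of $(T_pN,g|_{T_pN})$, and applying Definition \ref{DF real slice} to the complex Riemannian manifold $N$ gives precisely that $N\cap P$ is a real slice of $N$.

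The one point that genuinely requires care beyond unwinding definitions is the implicit claim that $N\cap P$ is a real-analytic submanifold of $P$ (and of $N$) in the first place; without this one cannot even speak of its tangent spaces. A naive dimension count shows that generic transversality of $N$ (real dimension $2\dim_{\C}N$) and $P$ (real dimension $\dim_{\C}M$) inside $M$ does not automatically yield an intersection of the correct dimension $\dim_{\C}N$, so the hypothesis "$N\cap P$ is non-empty" has to be read as tacitly including that the intersection is clean at the points under consideration. I would flag this but proceed under that tacit assumption, exactly as the paper seems to intend; once the submanifold structure is granted, the argument reduces to the two-step chain outlined above.
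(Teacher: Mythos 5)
Your argument is correct and is essentially the paper's own: the corollary is stated there as an immediate consequence of Proposition \ref{ST submanifold of real slice}, with exactly the reduction you describe (real-valuedness and non-degeneracy of $g$ on $T_p(N\cap P)$ do not depend on whether the subspace is viewed inside $T_pM$ or $T_pN$). Your added caveat about the intersection actually being a submanifold of the right dimension is a reasonable observation that the paper leaves tacit.
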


It should be noted that for a given complex manifold $M$, only very specific pseudo-Riemannian manifolds may appear as generic real slices of it. For if $Q$ is a generic real slice of $M$, then a complexification $\C Q$ is an open subset of $M$. This fully determines the complex Riemannian space $M$ in a neighborhood around $Q$. We will make the following definition (cf. \cite{Br}).

\begin{definition}\label{DF Wick-related}
Two pseudo-Riemannian manifolds $P$ and $Q$ are said to be {\em Wick-related} if there exists a holomorphic Riemannian manifold $(M,\hm)$, such that $P$ and $Q$ are embedded as real slices of $M$.
\end{definition}

As will be discussed later in more detail, when two manifolds are Wick-related, their geometries are intimately related through their common complexified ambient space. In this regard, as many geometric properties may be expressed as the vanishing of a certain tensor, the following two lemmas will turn out to be useful for the applications in Section \ref{SC applications}. First we have the following very trivial observation.

\begin{lemma} \label{ST from complex-analogous to pseudo-Riem}
If $N$ is a complex $n$-dimensional submanifold of a holomorphic Riemannian manifold $M$ for which a certain holomorphic tensor field $F$ vanishes, then for any real slice $P$ of $M$ for which the intersection $Q=N\cap P$ is a real $n$-dimensional submanifold, we have that $F|_Q$ vanishes on $Q$ as well.
\end{lemma}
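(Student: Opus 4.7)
The statement is essentially tautological, as the author admits, so my plan is more about setting up the correct interpretation of the restriction $F|_Q$ than about any substantive argument.

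First, I would fix the viewpoint on $Q$ inside $N$. Since $P$ is a real slice of $M$ and $N$ is a complex submanifold of $M$, Corollary \ref{ST intersection is real slice} gives that the intersection $Q = N\cap P$ is a real slice of $N$. By assumption $Q$ has real dimension $n = \dim_\C N$, so $Q$ is moreover a \emph{generic} real slice of $N$: at every point $q\in Q$ the real tangent space $TQ|_q$ sits inside the complex tangent space $TN|_q$ as a totally real generic $\R$-subspace, i.e., $TN|_q = TQ|_q \oplus i\,TQ|_q$. This is precisely what allows us to talk about $F|_Q$ as a well-defined real tensor field on $Q$: one evaluates the $\C$-multilinear map $F_q$ on tuples of vectors in $TQ|_q$ (and covectors in the dual), and the outcome is a real number because $Q$ is a real slice of $N$.

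Second, I would translate the hypothesis. To say that the holomorphic tensor field $F$ vanishes on $N$ means exactly that, for every $p\in N$, the tensor $F_p \in (T^k_l N)|_p$ is the zero $\C$-multilinear map on the appropriate product of copies of $TN|_p$ and $T^*N|_p$. Since $Q\subset N$ as a set, this in particular gives $F_q = 0$ for every $q\in Q$.

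Finally, combining the two points, the restriction $F|_Q$ at each $q\in Q$ is obtained by plugging into $F_q$ vectors from the real subspace $TQ|_q\subset TN|_q$. A zero multilinear map restricted to any subset of its arguments is still zero, so $F|_Q = 0$ on $Q$, which is the desired conclusion. The only step that required any thought at all was identifying why $F|_Q$ makes sense as an object on the real manifold $Q$ in the first place; after that the conclusion is immediate, which is why the author labels the lemma as a trivial observation.
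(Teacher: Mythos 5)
Your argument is correct and takes the same (essentially tautological) route as the paper, which states this lemma without any proof, calling it a ``very trivial observation'': since $F_p=0$ for every $p\in N$ and $Q\subset N$, the restriction of the zero multilinear map to the real subbundle $TQ\subset TN|_Q$ is zero. One minor caveat in your setup: for a general holomorphic tensor $F$ the values of $F|_Q$ on vectors of $TQ$ need not be real (real-valuedness is special to the metric $g$ in the definition of a real slice), but this is immaterial here since $F$ vanishes identically anyway.
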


The second lemma follows from \thref{ST analyt cont vector field}, and is in some some sense a converse to Lemma \ref{ST from complex-analogous to pseudo-Riem}.

\begin{lemma} \label{ST from pseudo-Riem to complex-analogous}
Let $M$ be a holomorphic Riemannian manifold, and let $P$ be a generic real slice of $M$. Then given a real-analytic pseudo-Riemannian submanifold $Q\subset P$ on which a particular real-analytic tensor $F$ vanishes, then there exists a complexification $N=\C Q\subset M$ for which the holomorphic extension $\C F$ of $F$ vanishes on $N$.
\end{lemma}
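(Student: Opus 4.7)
My plan is to deduce the statement from the uniqueness of holomorphic extension in \thref{ST analyt cont vector field}, applied to the real-analytic tensor section $F$ on $Q$, together with the observation that the zero section is trivially another such holomorphic extension. The argument naturally splits into three steps: verifying that $Q$ is totally real in $M$, holomorphically extending $F$ to a complexification $N=\C Q\subset M$, and then identifying $\C F$ with the zero section of the tensor bundle.

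For the first step, since $P$ is a real slice of $M$ and $Q\subset P$ is a real submanifold, Proposition~\ref{ST submanifold of real slice} yields that $Q$ is itself a real slice of $M$. Proposition~\ref{ST real slice} then tells us that $Q$ is a totally real submanifold of $M$. Since by assumption $Q$ is real-analytic and $F$ is a real-analytic tensor field on $Q$, the hypotheses of \thref{ST analyt cont vector field} are in place.

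For the second step, I regard $F$ as a real-analytic section of the restricted holomorphic tensor bundle $T_l^k M|_Q$, where $(l,k)$ is the type of $F$. Applying \thref{ST analyt cont vector field} produces a complexification $N=\C Q\subset M$ of $Q$ together with a unique holomorphic extension $\C F\in\holga(T_l^k M|_N)$ satisfying $(\C F)|_Q=F$. For the third step, since $F$ vanishes identically on $Q$, the zero section of $T_l^k M$ restricted to $N$ is also a holomorphic section agreeing with $F$ on $Q$. By the uniqueness clause of \thref{ST analyt cont vector field}, the two extensions must coincide, so $\C F\equiv 0$ on $N$, as required.

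The proof is essentially formal once the earlier results are invoked, so I do not anticipate a serious obstacle. The only subtle ingredient, hidden inside \thref{ST analyt cont vector field} and Theorem~\ref{ST unextendible extension}, is the identity principle for holomorphic sections on the complex manifold $N$, which relies on $Q$ being a generic totally real submanifold of $N$ of maximal real dimension. The hypothesis that $P$ itself is generic in $M$ does not enter the extension argument directly; it simply guarantees that the holomorphic Riemannian structure of $M$ is determined in a neighborhood of $P$, so that constructing $\C Q$ inside $M$ is unambiguous.
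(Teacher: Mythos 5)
Your proof is correct and follows exactly the route the paper intends: the authors give no separate proof but state that the lemma "follows from" Corollary~\thref{ST analyt cont vector field}, and your three steps (Propositions~\ref{ST submanifold of real slice} and~\ref{ST real slice} to see that $Q$ is totally real, the corollary to produce $N=\C Q$ and the unique holomorphic extension $\C F$, and the zero section plus uniqueness to conclude $\C F\equiv 0$) are precisely the natural way to fill that in. Your closing remark correctly identifies the identity principle on the generic totally real submanifold $Q\subset N$ as the only substantive ingredient.
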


\section{Holomorphic Riemannian submanifold theory}\label{SC Hol Riem submanifold theory}

We will start this section with a brief overview of {\em holomorphic Riemannian submanifold theory}, the holomorphic Riemannian counterpart of (pseudo-)Riemannian submanifold theory. As we will see shortly, in the setting of holomorphic Riemannian submanifold theory, all tensors and operators of interest turn out to be holomorphic in nature, meaning that they are respectively either holomorphic tensors, or operators that result in a holomorphic tensor if all of their arguments are holomorphic. This is an important property, as it allows for holomorphic extensions of any vector field constructed by them.

\paragraph{}

It is well-known that on a complex manifold, given holomorphic vector fields $X,Y\in \holga(M)$, we have that $[X,Y]$ is again a holomorphic vector field, and $[X,cY]=[cX,Y]=c[X,Y]$ for any $c\in\C$ (cf. \cite{KoNo}). 

On a holomorphic Riemannian manifold $(M,g)$, the metric $g$ is a holomorphic tensor by definition, and it is a complexification of the pseudo-Riemannian metric $g|_Q$ for any real slice $Q\subset M$. It follows from Koszul's formula for the holomorphic Riemannian Levi-Civita connection that, given $X,Y\in \holga(TM)$, $\nabla_X Y$ is a holomorphic vector field as well. Furthermore, for a holomorphic function $f$ and $c\in \C$, we have that $\nabla_{f X} Y = f \nabla_X Y$ and $\nabla_{X} c Y = c \nabla_X Y$ (cf. \cite{Sl}).

The holomorphic Riemannian curvature tensor is defined by
$$ R(X,Y)Z = \nabla_X\nabla_Y Z - \nabla_Y\nabla_X Z - \nabla_{[X,Y]}Z.$$
It follows from a simple calculation that this indeed defines a $\C$-linear tensor. This tensor is moreover holomorphic by the previous observations on the holomorphy of the Lie bracket and the Levi-Civita connection. 

We can then use $R$ to define a holomorphic Riemannian analogue of the sectional curvature as follows:
\begin{equation}\label{EQ sectional curvature}
K: \holga(T^2M^{\mathrm{n.d.}})\to \C: X \otimes Y \mapsto \frac{\hm(R(X,Y)Y,X)}{\hm(X,X)\hm(Y,Y)-\hm(X,Y)^2},
\end{equation}
where $T^2M^{\mathrm{n.d.}}$ denotes the subbundle of $(2,0)$-tensors $X\otimes Y$ in $T^2M$, for which $X$ and $Y$ span a non-degenerate complex plane, i.e., for which the denominator in \eqref{EQ sectional curvature} is non-zero. Then $K$ is a holomorphic map and this definition corresponds to the definition of complex sectional curvature in \cite{DoKoPe}. 

In similar ways, we can define the holomorphic Riemannian analogues of the Ricci-curvature, the scalar curvature, etc. As long as these holomorphic Riemannian analogues of common pseudo-Riemannian quantities are defined through operations that preserve holomorphy, the result will be holomorphic as well.

We will now continue with defining some extrinsic tensors and operators for submanifolds in the holomorphic Riemannian setting. Let $(M,\hm)$ be a holomorphic Riemannian manifold and $N$ a holomorphic Riemannian submanifold. We denote the Levi-Civita connection of the ambient space $M$ by $D$, and the induced Levi-Civita connection on the submanifold $N$ by $\nabla$.

\begin{proposition} \label{prop:topandbot}
The projections $\cdot^\top$ and $\cdot^\bot$ of $TM|_N =TN\oplus (TN)^\bot$ to $TN$ and $(TN)^\bot$ respectively, are both holomorphic tensor fields.
\end{proposition}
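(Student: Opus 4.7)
The plan is to verify both statements by a local computation in adapted holomorphic coordinates. Since $N$ is a complex submanifold of $M$, around any point $p\in N$ there exist holomorphic coordinates $(z_1,\ldots,z_n)$ on $M$ in which $N$ is locally given by $z_{k+1}=\cdots=z_n=0$, where $k=\dim_{\C}N$. In such coordinates $\{\partial_1,\ldots,\partial_k\}$ is a local holomorphic frame for $TN$, and $\{\partial_1,\ldots,\partial_n\}$ is one for $TM|_N$.

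The $\C$-tensoriality of $\cdot^\top$ and $\cdot^\bot$ is immediate: at each $p\in N$ both maps are the $\C$-linear projections associated with the direct sum decomposition $TM|_p = TN|_p\oplus (TN|_p)^\bot$. Each is therefore a pointwise $\C$-linear operator, and hence a section of the holomorphic bundle $\mathrm{Hom}(TM|_N,TM|_N)$, so it only remains to check holomorphy of the component functions.

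Next, I would make the tangential projection explicit. For $X=X^i\partial_i\in TM|_N$, writing $X^\top = a^j\partial_j$ with $j$ running from $1$ to $k$, the defining orthogonality conditions $\hm(X-X^\top,\partial_l)=0$ for $l=1,\ldots,k$ yield the linear system
\begin{equation*}
\sum_{j=1}^k \hm_{jl}\,a^j \;=\; \sum_{i=1}^n \hm_{il}\,X^i.
\end{equation*}
Because $N$ is a holomorphic Riemannian submanifold, the induced metric is non-degenerate, so the matrix $G=(\hm_{jl})_{j,l=1}^k$ is invertible on $N$. Its entries are holomorphic as restrictions of the ambient holomorphic components $\hm_{jl}$, and by Cramer's rule the entries of $G^{-1}$ are holomorphic as well. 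Solving for the $a^j$ expresses the components of $\cdot^\top$ as polynomial combinations of the $\hm_{ij}$ and the entries of $G^{-1}$, all holomorphic on $N$. This proves that $\cdot^\top$ is a holomorphic tensor field, after which $\cdot^\bot = \Id - \cdot^\top$ is holomorphic as well, since holomorphic tensor fields are closed under addition.

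The only substantive observation is that the non-degeneracy of the induced metric, which is built into the definition of a holomorphic Riemannian submanifold, is exactly what is needed to invert $G$ holomorphically. The rest is a direct holomorphic transcription of the usual pseudo-Riemannian computation, and I do not anticipate a genuine obstacle.
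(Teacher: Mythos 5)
Your proof is correct, but it takes a different route from the paper's. The paper constructs, via a holomorphic Gram--Schmidt process, a local holomorphic orthonormal frame $\{X_1,\ldots,X_n\}$ of $TN$ near each point and writes $Y^\top=\sum_{i=1}^n g(X_i,Y)\,X_i$, from which holomorphy of $\cdot^\top$ is read off at once; the step $\cdot^\bot=\Id-\cdot^\top$ is the same in both arguments. You instead work in adapted holomorphic coordinates, derive the normal equations for the tangential component, and invert the Gram matrix $G=(g_{jl})$ holomorphically via Cramer's rule. Both arguments hinge on exactly the same hypothesis --- non-degeneracy of the induced metric on $N$ --- which you correctly identify as the crux. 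Your version has the modest advantage of avoiding the square roots implicit in holomorphic Gram--Schmidt (where one must check at each step that the vector to be normalized is non-null and choose a local branch of $\sqrt{\,\cdot\,}$, which the paper leaves tacit), at the cost of a slightly longer computation; the paper's frame formula is more compact and is the form reused later, e.g.\ in the definition of the holomorphic Riemannian mean curvature.
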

\begin{proof}
For any point $p\in N$, it is possible through a Gram-Schmidt orthonormalization process to construct a holomorphic orthonormal frame $\{X_1,\ldots,X_n\}$ on some neighborhood $U$ of $p$ in $N$. 
Then for any section $Y \in \holga(TM|_U)$, one has
\begin{equation*} 
Y^\top = \sum_{i=1}^n \hm(X_i,Y)X_i. 
\end{equation*}
From this expression, it is seen that the tangent projection $\cdot^\top$ is a holomorphic tensor, and consequently, that the orthogonal projection $\cdot^\bot=\Id-\cdot^\top$ is a holomorphic tensor as well.
\end{proof}

An important consequence of Proposition \ref{prop:topandbot} is the following.

\begin{proposition}
The holomorphic Riemannian analogues of the second fundamental form $h$ and the shape operator $A$ are holomorphic tensors.
\end{proposition}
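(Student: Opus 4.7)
My plan is to start from the Gauss and Weingarten formulas
\begin{equation*}
  D_X Y = \nabla_X Y + h(X,Y), \qquad D_X \xi = -A_\xi X + \nabla^\bot_X \xi,
\end{equation*}
valid for tangent vector fields $X,Y$ to $N$ and normal vector fields $\xi$, which, after decomposition along $TM|_N = TN \oplus (TN)^\bot$, yield
\begin{equation*}
  h(X,Y) = (D_X Y)^\bot, \qquad A_\xi X = -(D_X \xi)^\top.
\end{equation*}
These identities will be the starting point, and two things need to be verified: first that they define $\C$-multilinear tensors, and second that they send holomorphic inputs to holomorphic outputs.

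For $\C$-tensoriality, linearity in the differentiating slot is inherited from $D$; in the other slot the Leibniz term $X(f)Y$ (resp.\ $X(f)\xi$) is killed after projection to the normal (resp.\ tangential) component, since $Y\in TN$ and $\xi\in (TN)^\bot$. Symmetry of $h$ and the Weingarten relation $\hm(h(X,Y),\xi)=\hm(A_\xi X,Y)$ then follow as in the classical case, from torsion-freeness of $D$ and from differentiating $\hm(Y,\xi)=0$.

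For holomorphy I would proceed as follows. Given $X,Y\in\holga(TN)$ and $\xi\in\holga((TN)^\bot)$, one can locally extend them to holomorphic sections $\tilde X,\tilde Y,\tilde\xi$ of $TM$ on a neighborhood of any point of $N$ in $M$; this is possible because $N$ is a complex submanifold of $M$, for instance by working in holomorphic adapted coordinates and extending the coefficient functions to be independent of the normal holomorphic coordinates. Since $D$ sends holomorphic vector fields on $M$ to holomorphic vector fields on $M$, as recalled at the start of this section, the fields $D_{\tilde X}\tilde Y$ and $D_{\tilde X}\tilde\xi$ are holomorphic sections of $TM$, and hence so are their restrictions along $N$. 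Applying Proposition~\ref{prop:topandbot} then shows that the projections $(D_{\tilde X}\tilde Y)^\bot=h(X,Y)$ and $-(D_{\tilde X}\tilde\xi)^\top=A_\xi X$ are holomorphic sections, as required.

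The main subtlety I anticipate is justifying the local holomorphic extension of sections from $N$ to a neighborhood in $M$; however this is standard for complex submanifolds, and one could equally well avoid it by working with a fixed holomorphic orthonormal frame on $M$ along $N$ (obtained by Gram-Schmidt as in the previous proposition) and checking holomorphy of the scalar components $\hm(D_{X_i}X_j,\xi_\alpha)$ and $\hm(D_{X_i}\xi_\alpha,X_j)$. The remaining steps are routine bookkeeping built on the holomorphy-preserving properties of $D$, $\hm$, and the projections $\cdot^\top,\cdot^\bot$ established earlier.
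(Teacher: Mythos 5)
Your proof follows essentially the same route as the paper's: both express $h(X,Y)=(D_XY)^\bot$ and $A_\xi X=-(D_X\xi)^\top$ via the Gauss and Weingarten formulas and then invoke the holomorphy of the projections $\cdot^\top,\cdot^\bot$ together with the holomorphy-preserving property of $D$, with tensoriality checked by noting the Leibniz terms are killed by the projections. You supply more detail than the paper (notably the local holomorphic extension of sections off $N$, which the paper leaves implicit), but the argument is the same and is correct.
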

\begin{proof}
Let $M$ and $N$ be as above, and let $X$ and $Y$ be vector fields tangent to $N$ and $\xi$ a vector field normal to $N$. In the holomorphic Riemannian setting, the formulas of Gauss and Weingarten read identical the ones in the (pseudo-)Riemannian setting, which is:
\begin{align}\label{EQ Gauss and Weingarten}
D_X Y &= \nabla_X Y + h(X,Y),\\
D_X \xi &= -A_\xi X + \nabla_X^\bot \xi.
\end{align}
Hence $h$ and $A$ can be expressed as: 
\begin{equation} \label{eq:handA}
h(X,Y) = (D_X Y)^\bot,\qquad A_{\xi} X = -(D_X \xi)^\top.
\end{equation}
From the previous proposition, it immediately follows that $h$ and $A$ are holomorphic, and tensoriality is verified by a straightforward calculation.
\end{proof}
Note that the orthogonal counterparts of the equations \eqref{eq:handA} are
\begin{equation*}
\nabla_X Y = (D_X Y)^\top,\qquad \nabla_X^\bot \xi = (D_X \xi)^\bot,
\end{equation*}
from which can be seen that the induced connection on $N$, which is of course the Levi-Civita connection of $(N,g)$, and the normal connection are $\C$-linear holomorphic operators. 

Based on these fundamental extrinsic operators, many other extrinsic quantities can be defined. An important example of this is the {\em holomorphic Riemannian mean curvature} 
$$ H: N \to TM: p \mapsto \frac{1}{n} \mathrm{Tr}(h|_p) = \frac 1n \sum_{i=1}^n h(X_i,X_i) \in (TN)^{\bot}|_p, $$ 
where $X_1,\ldots,X_n$ is a orthonormal basis of $TN|_p$. Again, it is seen immediately that this defines a holomorphic tensor field.

\begin{remark}\label{RM holomorphic extension}
All of the holomorphic Riemannian tensors and operators discussed in this section, coincide with their pseudo-Riemannian counterparts when we restrict the (ambient) space to a generic real slice. By this we mean that if we take as input arguments some holomorphic extensions of real tensor fields over $Q$, then the output will be a holomorphic extension of the pseudo-Riemannian output on these real tensor fields. 
Indeed, the expressions through which these operators are defined are exactly the same as the way their pseudo-Riemannian counterparts would be defined on a real slice $Q$, except that the inputs are required to be holomorphic tensor fields. 
So in this sense, all the holomorphic Riemannian tensors and operators discussed above, are holomorphic extensions of their corresponding pseudo-Riemannian operators.
\end{remark}

\begin{remark}
Many well-known equations from (pseudo-)Riemannian submanifold theory, 
go through unaltered in the setting of holomorphic Riemannian submanifold theory. In particular we have got the equations of Gauss and Weingarten, i.e.:
\begin{align}
\label{EQ Gauss equation}g(\tilde R(X,Y)Z,W) &= g(R(X,Y)Z,W)+g(h(Y,W),h(X,Z))\\
 					&\qquad -g(h(X,W),h(Y,Z)),\nonumber\\
\label{EQ Codazzi equation}g(\tilde R(X,Y)\xi,\eta) &= g(R(X,Y)\xi,\eta)-g([A_\xi,A_\eta]X,Y),
\end{align}
where $\tilde R$ and $R$ denote the Riemann tensor on respectively the ambient manifold and the submanifold. These equations are identical to the equations in the ordinary (pseudo-)Riemannian case, as is the algebraic derivation through which these equations may be obtained (cf. \cite{Do}).
\end{remark}

\subsection{The standard holomorphic Riemannian space forms}

As mentioned above, an important example of a holomorphic Riemannian manifold is the complex Euclidean $n$-space $\C^n$. The curvature of this manifold vanishes and its isometry group is $E(n,\C) = \C^n \rtimes \mathrm O(n,\C)$, where $\C^n$ acts by translations. Some other important transformations of $\C^n$ are given by scalar multiplications with complex numbers $\alpha \neq 0$. It is clear that $L_\alpha: \C^n\to \C^n: z \mapsto \alpha z$ is a conformal transformation with conformal factor $\alpha^2$, i.e.,
$$ \hm|_{\alpha z}(d L_{\alpha}(X),d L_{\alpha}(Y)) = \alpha^2 \hm|_{z}(X,Y) $$
for all $z \in \C^n$ and all $X,Y\in T\C^n|_z$. In the particular cases that $\alpha=\pm 1$ and $\alpha = \pm i$, this gives respectively an isometry and an anti-isometry on the entire space. All dilations together with the group $E(n,\C)$ of isometries, generate the group of all conformal orthogonal transformations of the space $\C^n$, i.e. affine transformations which preserve orthogonality. In the following we will call such transformations {\em similarities}, and submanifolds that can be related to each other by a similarity are called {\em similar}.

Clearly, the generic real slices of $\C^n$ as a holomorphic inner product space (cf. Example \ref{EX pseudo-Euclidean}) are also generic real slices of $\C^n$ as a complex Euclidean space. Moreover, any translation of such a real slice $\R_k^n\subset \C^n$ over a complex vector $z\in\C^n$ results in an affine subspace $z+\R_k^n$, which is a real slice with the same signature. Of course, this space only differs from the former one if $z \notin V$. We will refer to such real slices of $\C^n$ as the {\em generic affine real slices}.

Another important example of a holomorphic Riemannian manifold is the holomorphic Riemannian sphere $\C S^n(\alpha)$ of radius $\alpha$, where $\alpha$ can be any non-zero complex number:
$$ \C S^n(\alpha) = \{ (z_1,\ldots,z_{n+1}) \in \C^{n+1} \ | \ z_1^2+\ldots+z_{n+1}^2 = \alpha^2 \}.$$ 
The holomorphic metric is induced by the holomorphic metric on $\C^{n+1}$. In the following, we will simply write $\C S^n$ for $\C S^n(1)$. All the $\C S^n(\alpha)$ are conformal since the dilation $L_\alpha:\C^{n+1}\to \C^{n+1}$ maps $\C S^n$ to $\C S^n(\alpha)$. The manifold $\C S^n(\alpha)$ has constant sectional curvature $1/\alpha^2$. In particular, the manifold $\C S^n(\alpha)$ is equal to the manifold $\C S^n(-\alpha)$, and anti-isometric to the manifolds $\C S^n(\pm i \alpha)$. As is suggested by the notation, the manifold $\C S^n$ is indeed a complexification of the sphere $S^n$ in $\R^{n+1}$.

As follows from Corollary \ref{ST intersection is real slice}, the intersection of $\C S^n$ with a real slice of $\C^{n+1}$ will be a real slice of $\C S^n$. If we consider the generic real slice $\R_k^{n+1}$ (cf. Example \ref{EX pseudo-Euclidean}) of $\C^{n+1}$ through the origin of $\C^{n+1}$, we obtain as real slice of $\C S^n$ the indefinite sphere (also called generalized de Sitter space)
\begin{equation*}
S^n_k = \{ (y_1,\ldots,x_{n+1}) \in \R^{n+1}_k \ | \ -y_1^2-\ldots-y_k^2+x_{k+1}^2+\ldots+x_{n+1}^2 = 1 \}.
\end{equation*}
For $k=0$, this is simply the $n$-dimensional round unit sphere $S^n$. 

It is known that the indefinite sphere $S_k^n$ is anti-isometric to the indefinite hyperbolic $n$-space (also called generalized anti-de Sitter space) $H_{n-k}^n$, where $H_k^n$ is defined by:
\begin{equation*}\label{EQ indef hyperbolic}
H^n_{k} = \{ (y_1,\ldots,x_{n+1}) \in \R^{n+1}_{k+1} \ | \ -y_1^2-\ldots-y_{k+1}^2+x_{k+2}^2+\ldots+x_{n+1}^2 = -1 \}.
\end{equation*}
That $S_k^n$ and $H_{n-k}^n$ are anti-isometric, can now be seen from the fact that by applying the anti-isometry $L_i$ to $\C^{n+1}$ (which also induces an anti-isometry between submanifolds and their image under $L_i$), we have that the submanifold $S_k^n$ is turned into $L_i S_k^n = L_i (\C S^{n} \cap \R_k^{n+1}) = L_i \C S^{n} \cap L_i \R_k^{n+1} = \C S^n(i) \cap L_i \R_k^{n+1}$. Now it follows from Equation \ref{EQ pseudo-Euclidean} that
$$L_i \R_k^{n+1} = \mathrm{span}_{\R}\{e_1,\ldots,e_k,i e_{k+1},\ldots,e_{n+1}\} \cong \R_{n-k+1}^{n+1},$$ so that $\C S^n(i) \cap L_i \R_k^{n+1}$ corresponds (after a rearrangement of coordinates) to $H_{n-k}^n$. Note that for $k=n$, we obtain the ordinary hyperbolic $n$-space.

So we have seen that on the one hand all the indefinite spheres are Wick-related, and on the other hand that all the indefinite hyperbolic spaces are Wick-related. Also, the indefinite spheres can be related with the indefinite hyperbolic spaces trough a dilation $L_i$ on the ambient space $\C^{n+1}$, which defines an anti-isometry between $S_k^n$ as a real slice of $\C S^n$, and $H_{n-k}^n$ as a real slice of $\C S^n(i)$. So, although the $n$-sphere is not directly Wick-related to hyperbolic $n$-space, they are in a sense Wick-related up to an anti-isometry. This means that a geometric problem in the $n$-dimensional sphere can be translated to an analogous problem in $n$-dimensional hyperbolic space, provided that the properties of interest are preserved under anti-isometric mappings. An easy example of this will be given in Section \ref{SC surfaces in CS^n}.

\section{Applications}\label{SC applications}

In this section we will demonstrate by some elementary examples in submanifold theory, how the above theory can be applied to reveal geometric correspondences among Wick-related manifolds. The main idea is as follows. As we know from Proposition \ref{ST real slice}, real slices are always totally real submanifolds. Hence, by \thref{ST analyt cont vector field} we have that any real-analytic tensor field over a generic real slice can be extended to a holomorphic tensor field on an open set of the ambient space. If this ambient space also contains other generic real slices, then by the uniqueness of the analytic extension, we have established a one-to-one correspondence between such tensor fields on these pseudo-Riemannian manifolds. As many geometric properties may be defined by the vanishing of a certain tensor, Lemmas \ref{ST from complex-analogous to pseudo-Riem} and \ref{ST from pseudo-Riem to complex-analogous} allow us to translate knowledge about such geometric properties among Wick-related manifolds (cf. Definition \ref{DF Wick-related}). Moreover, by proving a statement for the ambient holomorphic Riemannian manifold, we obtain similar statements simultaneously for all the entailed real-analytic real slices of this manifold. 

In the next three examples, we consider minimal, totally geodesic and parallel submanifolds in respectively $\C^3$, $\C S^3$ and $\C^3$. As these geometric properties are all invariant under similarities of the ambient space, the submanifolds only need to be characterized up to similarity. 

\subsection{Minimal surfaces of revolution in $\C^3$}\label{EX catenoid}

It is well-known that in $\R^3$, any minimal surface of revolution is similar to either a plane, or to the catenoid $C\subset \R^3$, given by
$$ C = \{ (x_1,x_2,x_3)\in\R^3 \ | \ x_1^2+x_2^2=\cosh^2 x_3 \}. $$
From this equation, it is indeed easily seen that $C$ is invariant under an $SO(2,\R)$ action, where $SO(2,\R)$ is represented as the subgroup of $SO(3,\R)$ given by the matrices of the form
\begin{equation}\label{EQ matrix revol}
\begin{pmatrix}
\cos \theta & -\sin \theta & 0 \\ 
\sin \theta & \cos \theta & 0 \\ 
0 & 0 & 1
\end{pmatrix}
\end{equation}
where $\theta\in\R$. Now by Lemma \ref{ST from pseudo-Riem to complex-analogous}, we obtain a minimal holomorphic Riemannian surface $\C C\subset \C^3$, given by
\begin{equation}\label{EQ catenoid in C3 1}
\C C = \{ (z_1,z_2,z_3)\in\C^3 \ | \ z_1^2+z_2^2=\cosh^2 z_3 \}. 
\end{equation}
Here and in the following, \emph{minimal} means that the (holomorphic Riemannian) mean curvature vector field $H$ vanishes everywhere. The surface \eqref{EQ catenoid in C3 1} is also easily seen to be invariant under the subgroup $SO(2,\C)\subset SO(3,\C)$, formed by the matrices \eqref{EQ matrix revol}, but now for $\theta\in \C$. This group contains both the Euclidean rotation group $SO(2,\R)$ as well as the indefinite group $SO(1,1,\R)$, given by all matrices of the form
$$\pm\begin{pmatrix}
\cosh \theta & \sinh \theta \\ 
\sinh \theta & \cosh \theta
\end{pmatrix}$$
where $\theta\in \R$. Note that the following four complex surfaces are all similar to $\C C$ in $\C^3$:
\begin{align}
& \{ (z_1,z_2,z_3)\in\C^3 \ | \ z_1^2+z_2^2=-\sinh^2 z_3 \}, \label{EQ catenoid in C3 2}\\
& \{ (z_1,z_2,z_3)\in\C^3 \ | \ z_2^2+z_3^2=\cosh^2 z_1 \}, \label{EQ catenoid in C3 3}\\
& \{ (z_1,z_2,z_3)\in\C^3 \ | \ z_1^2+z_2^2=-\sin^2 z_3 \}, \label{EQ catenoid in C3 4}\\
& \{ (z_1,z_2,z_3)\in\C^3 \ | \ z_2^2+z_3^2=-\sin^2 z_1 \}. \label{EQ catenoid in C3 5}
\end{align}
The surface \eqref{EQ catenoid in C3 2} is obtained from \eqref{EQ catenoid in C3 1} by a translation over a distance $\pi i/2$ in the direction of the $z_3$ axis and the surface \eqref{EQ catenoid in C3 4} is then obtained from \eqref{EQ catenoid in C3 2} by performing a dilation $L_i$ (which is an anti-isometry) on the ambient space $\C^3$. The surfaces \eqref{EQ catenoid in C3 3} and \eqref{EQ catenoid in C3 5} are obtained from \eqref{EQ catenoid in C3 1} and \eqref{EQ catenoid in C3 4} respectively, by permuting the coordinates $z_1$ and $z_3$. Hence \eqref{EQ catenoid in C3 3} and \eqref{EQ catenoid in C3 5} are rotation symmetric with respect to the $z_1$-axis. All these surfaces are mutually similar in the holomorphic Riemannian space $\C^3$, so there is no need to distinguish between them in $\C^3$. However, as we will see next, after intersecting them with a certain real slice, we may obtain surfaces that are not similar with respect to the similarities of this real slice (i.e., the group of affine transformations that preserve orthogonality within the real slice), thus giving rise to essentially different pseudo-Riemannian surfaces.

Of the five similar surfaces above, only \eqref{EQ catenoid in C3 1} and \eqref{EQ catenoid in C3 3} have non-empty intersection with the real slice $\R^3: y_1=y_2=y_3=0$, and in both cases we recover the classical catenoid. 

By intersecting the five surfaces with the real slice $\R_1^3:x_1=y_2=y_3=0$ on the other hand, we obtain the following five minimal surfaces (for the naming we follow \cite{An} and \cite{Wo}):
\begin{enumerate}
\item the Lorentzian hyperbolic catenoid, given by $-y_1^2+x_2^2=\cosh^2 x_3$,\label{LI Cat lor hyp}
\item the Lorentzian hyperbolic catenoid of the second kind, given by \linebreak[4] $y_1^2-x_2^2=\sinh^2 x_3$,\label{LI Cat lor hyp 2}
\item the Lorentzian elliptic catenoid, given by $x_2^2+x_3^2=\cos^2 y_1$,\label{LI Cat lor ell}
\item the spacelike hyperbolic catenoid, given by $y_1^2-x_2^2=\sin^2 x_3$,\label{LI Cat spa hyp}
\item the spacelike elliptic catenoid, given by $x_2^2+x_3^2=\sinh^2 y_1$.\label{LI Cat spa ell}
\end{enumerate}

Surfaces \eqref{LI Cat lor ell} and \eqref{LI Cat spa ell} are invariant under the group of Euclidean rotations (and reflections) $O(2,\R)\subset O(2,\C)$ whose axis of symmetry is the time-like $y_1$-axis. Surfaces \eqref{LI Cat lor hyp},\eqref{LI Cat lor hyp 2} and \eqref{LI Cat spa hyp} are invariant under the hyperbolic rotations (and reflections) $O(1,1,\R)\subset O(2,\C)$. Note that using only the property that the classical catenoid is a minimal surface in $\R^3$, we have obtained all of the other surfaces above, and their minimality is guaranteed without the need for any calculations.

\begin{remark}
In this and the following examples, the submanifolds are all described by implicit equations. This is done for convenience, but it should be noted that the possibility to use parametrizations is certainly there. For example, the six rotationally invariant minimal surfaces above (the one in $\R^3$ and the other five in $\R_1^3$), may respectively be parametrized as follows:
\begin{enumerate}\setcounter{enumi}{-1}
\item $L(u,v) = (\cos u \cosh v, \sin u \cosh v, v)$,\label{LI par Cat}
\item $L(u,v) = (i \sinh u \cosh v, \cosh u \cosh v, v)$,\label{LI par Cat lor hyp}
\item $L(u,v) = (i \cosh u \sinh v, \sinh u \sinh v, v)$,\label{LI par Cat lor hyp 2}
\item $L(u,v) = (i\, v, \cos u \cos v, \sin u \cos v)$,\label{LI par Cat lor ell}
\item $L(u,v) = (i \cosh u \sin v, \sinh u \sin v, v)$,\label{LI par Cat spa hyp}
\item $L(u,v) = (i\, v, \cos u \sinh v, \sin u \sinh v)$.\label{LI par Cat spa ell}
\end{enumerate}
It is worth noting how for example case \eqref{LI par Cat} is turned into case \eqref{LI par Cat lor ell} after multiplying the coordinate $u$ by $i$, a so-called Wick rotation. Just with some additional translations on the coordinates and suitable coordinate permutation on the ambient space, many of the other cases are obtained likewise.\remarksymbol
\end{remark}

\subsection{Totally geodesic surfaces in $\C S^n$}\label{SC surfaces in CS^n}

In this example we will see how the method of Wick-relations can be applied to situations where we have an ambient space other than $\C^n$. Before we come to this, it is good to realize that for a generically chosen holomorphic Riemannian manifold, one cannot even expect to find real slices at all (cf. \cite{Br}). Nevertheless, examples of holomorphic Riemannian manifolds that do contain real slices are easily constructed, as follows from Corollary \ref{ST intersection is real slice}: if $M$ is a holomorphic Riemannian submanifold of $\C^n$, which has a non-empty intersection with a certain real slice $\R_k^n$ of $\C^n$, then this intersection is itself a real slice of $M$.  We have already seen an application of this before, in obtaining $S_k^n$ as a real slice of $\C S^n$, and it can be applied to many other situations as well. For example, if we consider the complex surface $\C C \subset \C^3$ from the previous subsection, then the catenoid in $\R^3$ and the five minimal surfaces in $\R_1^3$ listed above are real slices of $\C C$, and thus Wick-related.

Just as a simple example, let us focus on 
$$ \C S^3 = \{ (z_1,z_2,z_3,z_4) \in \C^4 \ | \ z_1^2+z_2^2+z_3^2+z_4^2=1 \} $$
and consider the totally geodesic submanifold $\C S^2$ given by $z_1^2+z_2^2+z_3^2=1$ and $z_4=0$. We know this is a totally geodesic embedding by Lemma \ref{ST from pseudo-Riem to complex-analogous}, for if we intersect $\C S^2$ with the real slice $S^3\subset \C S^3$, given by $y_1=y_2=y_3=y_4=0$ (and hence $x_1^2+x_2^2+x_3^2+x_4^2=1$), we obtain the totally geodesic surface $S^2\subset S^3$, given by $x_1^2+x_2^2+x_3^2=1$ and $x_4=0$. From this, we know for sure that by intersecting $\C S^2$ (or submanifolds similar to it, such as $z_1^2+z_2^2+z_4^2=1$ and $z_3=0$ etc.) with one of the other real slices of $\C S^3$, we obtain totally geodesic submanifolds in those respective spaces as well. In this way, we find the following totally geodesic surfaces:
\begin{enumerate}
\item $S^2$ and $S_1^2$ in $S_1^3$, or equivalently $H_2^2$ and $H_1^2$ in $H_2^3$.
\item $S_1^2$ and $S_2^2$ in $S_2^3$, or equivalently $H_1^2$ and $H^2$ in $H_1^3$.
\item $S_2^2$ in $S_3^3$, or equivalently $H^2$ in $H^3$.
\end{enumerate} 
Note that the equivalent statement about embeddings of indefinite hyperbolic spaces is obtained through applying the anti-isometry $L_i$ on the ambient space $\C S^3$, which links each totally geodesic embedding of an indefinite $2$-sphere in an indefinite $3$-sphere to a totally geodesic embedding of some indefinite hyperbolic plane in an indefinite hyperbolic $3$-space. In particular, the totally geodesic embedding of the hyperbolic plane $H^2 : x_1^2+x_2^2-y_4^2=-1$ inside the hyperbolic space $H^3 :x_1^2+x_2^2+x_3^2-y_4^2=-1$ is thus obtained. Importantly, this simple example demonstrates how Wick-relationships can be used not only to reveal correspondences between pseudo-Riemannian spaces (possibly of different signature), but even to reveal correspondences between different kinds of ordinary Riemannian spaces (in this case the $n$-dimensional sphere and $n$-dimensional hyperbolic space).

\subsection{Parallel surfaces in holomorphic Riemannian $\C^3$}\label{EX Par}

In the following, we will classify surfaces in $\C^3$ with parallel second fundamental form, i.e., surfaces for which the holomorphic tensor $\nabla h$ vanishes identically. These are called parallel surfaces for short. 

\begin{lemma}\label{ST forms complex matrix}
Let $A$ be a linear operator on a complex two-dimensional vector space $V$, which is symmetric with respect to a holomorphic inner product $g$ on $V$. Then there exists an orthonormal basis $\{e_1,e_2\}$ of $V$ such that, with respect to $\{e_1,e_2\}$, $A$ takes one of the following forms:
\begin{eqnarray}
A=\begin{pmatrix}
\alpha & 0 \\
0 & \beta \end{pmatrix}\quad\mbox{or}\label{EQ diagonalizable}\\
A = \begin{pmatrix}
\alpha+1 & i \\
i & \alpha-1 \end{pmatrix}.\label{EQ undiagonalizable}
\end{eqnarray}
\end{lemma}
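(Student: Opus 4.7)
The plan is to split into cases based on whether $A$ is diagonalizable as a $\C$-linear operator on $V$. If $A$ is diagonalizable, I apply the remark in Section 2.1 (immediately following the display \eqref{EQ standard inner product}) that every diagonalizable $g$-symmetric operator admits an orthonormal basis of eigenvectors; this directly yields form \eqref{EQ diagonalizable}, with $\alpha,\beta$ the (possibly equal) eigenvalues.

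For the non-diagonalizable case, since $\dim_{\C} V=2$ the characteristic polynomial of $A$ must have a single double root $\alpha$, and the eigenspace will be one-dimensional, spanned by some $v\in V$. The decisive step is to prove that $v$ is isotropic, i.e.\ $g(v,v)=0$. I argue by contradiction: if $g(v,v)\neq 0$, normalize $v$ and extend to an orthonormal basis $\{e_1,e_2\}$ of $V$; in such a basis $g$ has Gram matrix the identity, so the condition $g(AX,Y)=g(X,AY)$ collapses to ordinary matrix symmetry $A^{T}=A$. Since $e_1=v$ is an eigenvector, $A$ would be upper triangular with $\alpha$ on both diagonal entries, and symmetry would then force $A=\alpha\Id$, contradicting non-diagonalizability.

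Once isotropy is secured, the plan is to construct a null frame: pick $w\in V$ with $g(v,w)=1$, and subtract $\tfrac{1}{2}g(w,w)\,v$ from $w$ to arrange $g(w,w)=0$ as well. In this frame $\{v,w\}$, $g$-symmetry together with $Av=\alpha v$ forces $Aw=\beta v+\alpha w$ for some $\beta\neq 0$, and the rescaling $v\mapsto\lambda v$, $w\mapsto\lambda^{-1}w$ replaces $\beta$ by $\lambda^{-2}\beta$; choosing $\lambda\in\C$ with $\lambda^{2}=-\beta/2$ normalizes $\beta$ to $-2$. I then pass to the orthonormal basis $e_1:=\tfrac{1}{\sqrt{2}}(v+w)$, $e_2:=\tfrac{i}{\sqrt{2}}(v-w)$ and expand $Ae_1$, $Ae_2$ in it; this will produce form \eqref{EQ undiagonalizable}, up to a harmless swap $e_1\leftrightarrow e_2$ and sign change on $e_2$ that places the diagonal entries $\alpha\pm 1$ exactly as in the statement. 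The main obstacle I anticipate is the isotropy of $v$: without it the null-frame approach is unavailable, whereas everything after that step reduces to a bookkeeping change of basis.
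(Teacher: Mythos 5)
Your proof is correct, but it follows a genuinely different route from the paper's. The paper works entirely inside the orthonormal frame bundle: starting from an arbitrary orthonormal basis it writes $A=\left(\begin{smallmatrix} m+a & b \\ b & m-a\end{smallmatrix}\right)$, rotates by a \emph{complex} angle $t$, and solves $\cot 2t = a/b$ when $a\neq\pm bi$ (giving form \eqref{EQ diagonalizable}) or $e^{\mp 2ti}=a$ when $a=\pm bi$ (giving form \eqref{EQ undiagonalizable}); the whole argument is a single explicit computation. You instead split on diagonalizability of $A$, invoke the paper's remark in Section 2.1 for the diagonalizable case, and in the non-diagonalizable case prove that the unique eigenline is isotropic, build a null frame, normalize the nilpotent part by the rescaling $v\mapsto\lambda v$, $w\mapsto\lambda^{-1}w$, and convert back to an orthonormal basis. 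All the steps check out: your contradiction argument for isotropy is sound (with $e_1=v/\sqrt{g(v,v)}$ the matrix of $A$ is honestly symmetric with first column $(\alpha,0)^{T}$, and the double root forces $A=\alpha\Id$), and expanding $Ae_1$, $Ae_2$ with $\beta=-2$, $v=\tfrac{1}{\sqrt2}(e_1-ie_2)$, $w=\tfrac{1}{\sqrt2}(e_1+ie_2)$ does yield $\left(\begin{smallmatrix}\alpha-1 & i \\ i & \alpha+1\end{smallmatrix}\right)$, which a swap of $e_1$ and $e_2$ brings to \eqref{EQ undiagonalizable}. What each approach buys: the paper's rotation trick is shorter and immediately exhibits the new orthonormal frame, but the dichotomy $a=\pm bi$ versus $a\neq\pm bi$ looks unmotivated; your argument is longer but explains structurally \emph{why} the exceptional form occurs (a non-diagonalizable symmetric operator must have an isotropic eigenline, which is only possible over $\C$), and it generalizes more readily to higher dimensions. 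One small caveat: the remark you cite from Section 2.1 is stated in the paper without proof, so for full self-containedness you would want to include the one-line justification that distinct eigenspaces are $g$-orthogonal and hence each carries a non-degenerate restriction of $g$.
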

\begin{proof}
Let $\{u_1,u_2\}$ be a any orthonormal basis of $V$. Since $A$ is symmetric, it can be written in the form 
$$A = 
\begin{pmatrix}
m+a & b \\
b & m-a 
\end{pmatrix}$$ 
with respect to $\{u_1,u_2\}$. Now consider a general orthonormal basis given by $e_1 = \cos t\,u_1 + \sin t\,u_2$ and $e_2 = -\sin t\,u_1 + \cos t\,u_2$, where $t$ is a complex number. We want to choose $t$ in such a way that $A$ takes one of the two forms above.

If $b=0$, we are done immediately, so let us assume $b\neq 0$.

Provided that $a \neq \pm b i$, we may choose $t$ a complex number such that $\cot 2t = a/b$, and one verifies that $A$ takes the form \eqref{EQ diagonalizable} with respect to $\{e_1,e_2\}$. (Note that $\cot t$ assumes all complex values except for $\pm i$.)

In case $a = b i$, we can choose $t$ such that $e^{-2ti}=a$, and $A$ will take the form \eqref{EQ undiagonalizable} with respect to $\{e_1,e_2\}$. In case $a = -b i$, we choose $t$ such that $e^{2ti}=a$ to get the same result.
\end{proof}

\begin{theorem}\label{ST classification hol riem}
A parallel surface $M$ in $\C^3$ is similar to an open part of one of the following four surfaces.
\begin{enumerate}
\item The complex plane $\C^2$, given by $z_3 = 0$.\label{case 1}
\item The complex sphere $\C S^2$, given by $z_1^2+z_2^2+z_3^2=1$.\label{case 2}
\item The cylinder $\C S^1\times \C$, given by $z_1^2+z_2^2=1$.\label{case 3}
\item The flat minimal surface $B$, given by $z_3 = (z_1+iz_2)^2$.\label{case 4}
\end{enumerate}
\end{theorem}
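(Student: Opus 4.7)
My plan is to follow the classical template for classifying parallel surfaces, organized around the two algebraic normal forms for the shape operator provided by Lemma~\ref{ST forms complex matrix}. Locally on $M$, choose a unit normal $\xi$ (available since the holomorphic normal bundle carries a non-degenerate inner product, so that Gram--Schmidt applies) and set $A = A_\xi$. The parallelism $\nabla h = 0$ is equivalent to $\nabla A = 0$, so the trace and determinant of $A$ are locally constant complex numbers. By Lemma~\ref{ST forms complex matrix}, at each point one may choose an orthonormal tangent frame putting $A$ into either the diagonal form \eqref{EQ diagonalizable} or the Jordan form \eqref{EQ undiagonalizable}, and which alternative holds is locally constant. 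Cases~\ref{case 1}--\ref{case 3} will arise from the diagonalizable alternative, and case~\ref{case 4} from the Jordan alternative.

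In the diagonalizable case, let $\alpha, \beta \in \C$ denote the (constant) eigenvalues of $A$. If $\alpha = \beta = 0$ then $M$ is totally geodesic in the flat ambient $\C^3$ and is therefore an open part of an affine complex plane, giving case~\ref{case 1}. If $\alpha = \beta \neq 0$ then $M$ is totally umbilical; the holomorphic version of the classical position-vector argument shows $p + \xi/\alpha$ to be a constant vector $c$, placing $M$ on the complex sphere of radius $1/\alpha$ centred at $c$, and a similarity reduces this to case~\ref{case 2}. If exactly one eigenvalue vanishes, then $\nabla A = 0$ forces the orthonormal eigenframe of $A$ to be parallel, and integration of the ambient Gauss--Weingarten equations produces a complex cylinder, case~\ref{case 3}. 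Finally, when $\alpha \neq \beta$ are both non-zero, the same parallel-eigenframe argument yields $K = 0$ intrinsically on $M$, while the Gauss equation in flat ambient forces $K = \alpha \beta \neq 0$, a contradiction.

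The Jordan case is the novel ingredient. Write $A = \alpha I + N$ with $N$ of rank one and nilpotent; since $\nabla A = 0$ and $\alpha$ is constant, $N$ is itself parallel. As $N$ is $g$-self-adjoint and nilpotent of rank one, $\ker N$ is a null line, and parallelism of $N$ makes this line parallel as a subbundle of $TM$. Choose a local null section $V$ of $\ker N$ with $\nabla V = 0$. Then $R(X,Y)V = 0$ for all tangent $X,Y$, and the skew-symmetry $R(X,Y,Z,V) = -R(X,Y,V,Z)$ shows that every component of $R$ with an argument equal to $V$ vanishes; in particular the sectional curvature $K$ of $M$ vanishes identically. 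On the other hand, the Gauss equation for a surface in the flat space $\C^3$ identifies $K$ with $\det A = \alpha^2$, so $\alpha = 0$. Hence in the Jordan case, $M$ is flat, minimal, and carries a parallel null direction.

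To recover the explicit normal form, I introduce holomorphic coordinates $(u,v)$ on $M$ with $\partial_u = V$ and $\partial_v$ tangent to a complementary null direction. Since $V$ is parallel and null, the curves $v = \mathrm{const}$ are null geodesics of $\C^3$, hence straight null lines, and $M$ is ruled by a one-parameter family of such lines. The remaining Gauss--Weingarten data, combined with the prescribed form \eqref{EQ undiagonalizable} of $A$ for $\alpha = 0$, reduces to a simple linear system that can be integrated explicitly, and a similarity of $\C^3$ brings the result to the standard form $z_3 = (z_1 + iz_2)^2$ of case~\ref{case 4}. I anticipate the main obstacle to be this final integration step, where the moving frame must be chosen compatibly with \eqref{EQ undiagonalizable} while translating the parallel-shape-operator data into explicit ODEs for the immersion; the conceptual heart of the argument is the preceding paragraph, where the interplay between the parallel null direction and the Gauss equation forces $\alpha = 0$ in the Jordan case, with no direct analogue in the classical real argument.
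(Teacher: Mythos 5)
Your overall architecture coincides with the paper's: split on the two normal forms of Lemma \ref{ST forms complex matrix}, recover cases \eqref{case 1}--\eqref{case 3} from the diagonalizable form and case \eqref{case 4} from the non-diagonalizable one, and use the Gauss equation to eliminate the remaining parameters. The diagonalizable case is sound; replacing the paper's explicit integration in geodesic coordinates by the position-vector argument $p+\xi/\alpha=\mathrm{const}$ in the umbilical subcase is a legitimate (and shorter) variant, and your exclusion of the subcase $\alpha\neq\beta$ with both nonzero matches the paper's.

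The gap is in the Jordan case, at the sentence ``Choose a local null section $V$ of $\ker N$ with $\nabla V=0$.'' Parallelism of the subbundle $\ker N$ only gives $\nabla_X V=\omega(X)V$ for some $1$-form $\omega$ (a \emph{recurrent} direction); a genuinely parallel local section exists iff $d\omega=0$, i.e.\ iff the induced connection on the line $\ker N$ is flat. But for a null parallel line in a surface one has $R(X,Y)V=d\omega(X,Y)V$ and also $R(V,Y)V=K\,g(Y,V)\,V$, so $d\omega=0$ is \emph{equivalent} to $K=0$ --- which is exactly what you then derive from the existence of $V$. As written the argument is circular. (Contrast this with your diagonalizable case, where the eigendirections are non-null and the unit-length normalization forces the connection form to vanish for free; the null eigendirection cannot be normalized, which is precisely where the extra work hides.) The step can be repaired: self-adjointness and nilpotency give $N=c\,V^{\flat}\otimes V$ with $V$ null and $c$ nonvanishing, and writing out $\nabla N=0$ yields $X(c)+2c\,\omega(X)=0$, so $\omega=-\tfrac12\,d\log c$ is exact and $d\omega=0$ follows; but this computation is the actual content of the step and must appear. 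The paper sidesteps the issue by computing directly in the orthonormal frame adapted to \eqref{EQ undiagonalizable}, where $\nabla h=0$ forces $\omega_1^2=0$ and hence flatness at once. Finally, you defer the integration producing $z_3=(z_1+iz_2)^2$; it is routine, but a complete proof needs it (the paper integrates the explicit Gauss--Weingarten system in the parallel coordinates $\partial_u=e_1$, $\partial_v=e_2$).
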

\begin{proof}
Assume $M$ is a complex submanifold with parallel holomorphic Riemannian second fundamental form $h$, and let $L:U\subset \C^2\to \C^3$ denote a local parametrization of $M\subset \C^3$. Let $e_3$ be a (holomorphic Riemannian) unit normal vector field. First suppose that with respect to a local orthonormal frame field $\{e_1,e_2\}$, the shape operator $A$ takes the form \eqref{EQ diagonalizable}. A direct computation shows that the surface is parallel if and only if $\alpha$ and $\beta$ are constant and $(\alpha-\beta)\omega_1^2=0$, where $\omega_1^2$ is a one-form defined by $\nabla_X e_1 = \omega_1^2(X) e_2$.

If $\alpha=\beta=0$, then the surface is totally geodesic, i.e. $h = 0$, which gives case \eqref{case 1}. 

If $\alpha=\beta\ne 0$, then the second fundamental form satisfies:
$$h(e_1,e_1)=\alpha e_3,\quad h(e_1,e_2)=0,\quad h(e_2,e_2)=\alpha e_3.$$
Thus it follows from the Gauss equation \eqref{EQ Gauss equation} that $K=\alpha^2$. By choosing (holomorphic) geodesic coordinates $(u,v)$ such that $g = du^2 + \cos^2(\alpha u) dv^2$, we have that
\begin{align*}
L_{uu} = \alpha e_3,\quad L_{uv} &= - \alpha \tan(\alpha u) L_v,\quad L_{vv} = \alpha \cos^2(\alpha u)e_3 + \frac \alpha 2 \sin(2 \alpha u) L_u,\\ 
&\quad D_{\del_u}e_3 = -\alpha L_u, \quad D_{\del_v}e_3 = -\alpha L_v.
\end{align*}
The solution of this system of equations is, up to translation, given by
\begin{align*}
L(u,v) &=  \frac{1}{\alpha}\left(\cos(\alpha u) \cos(\alpha v) w_1 +  \cos(\alpha u) \sin(\alpha v) w_2 + \sin(\alpha u) w_3\right)\\
e_3(u,v) &= -\cos(\alpha u) \cos(\alpha v) w_1 - \cos(\alpha u) \sin(\alpha v) w_2 - \sin(\alpha u) w_3
\end{align*}
where $\{w_1,w_2,w_3\}$ is an orthonormal basis of $\C^3$ (as holomorphic inner product space). After rescaling, we obtain case \eqref{case 2} of the theorem.

If $\alpha\neq \beta$, then $\omega_1^2 = 0.$ Hence $M$ is flat, so that by the equation of Gauss we have $\operatorname{det} A = \alpha \beta = 0$. Without loss of generality, we may assume $\beta = 0$. Now, choose coordinates $(u,v)$ on $M$ with $\del_u = e_1$ and $\del_v = e_2$. Then, the immersion $L$ satisfies
$$
L_{uu} = \alpha e_3,\quad L_{uv} = 0,\quad L_{vv} = 0,\quad D_{\del_u}e_3 = -\alpha L_u, \quad D_{\del_v}e_3 = 0.
$$
The solution of this system of equations is, up to translation, given by
\begin{align*}
L(u,v) &= \frac{1}{\alpha}\left(\cos(\alpha u) w_1 + \sin(\alpha u) w_2\right) + v\,w_3,\\
e_3(u,v) &= -\cos(\alpha u) w_1 - \sin(\alpha u) w_2,
\end{align*}
where $\{w_1,w_2,w_3\}$ is an orthonormal basis of $\C^3$. After applying a suitable similarity in $\C^3$, we obtain case \eqref{case 3} of the theorem.

Now suppose that with respect to an orthonormal frame field $\{e_1,e_2\}$, $A$ takes the form \eqref{EQ undiagonalizable}. Since the surface is parallel, we obtain that $\alpha$ is constant and $\omega_1^2 = 0$. Hence the surface is flat. But from the equation of Gauss, we obtain that the Gaussian curvature is given by $\operatorname{det} A = \alpha^2$, so that $\alpha=0$. If we take coordinates $(u,v)$ on $M$ with $\del_u = e_1$ and $\del_v = e_2$, then the formulas of Gauss and Weingarten yield the following system of equations:
\begin{align*}
\quad L_{uu} &= e_3,\quad L_{uv} = i e_3,\quad L_{vv} = - e_3,\\
\quad D_{\del_u}e_3 &= -L_u-i L_v, \quad D_{\del_v}e_3 = -i L_u+L_v.
\end{align*}
The solution of this system of equations is, up to translation, given by
\begin{align*}
L(u,v) &= \left(u-\frac 1 6 (u+i v)^{3}\right)w_1 + \left(v-\frac i 6 (u+i v)^{3}\right)w_2 + \frac 1 2 (u+i v)^{2} w_3,\\
e_3(u,v) &= -(u+i v) w_1 - (i u-v) w_2 + w_3,
\end{align*}
where $\{w_1,w_2,w_3\}$ is an orthonormal basis of $\C^3$. After applying a suitable similarity, we obtain case \eqref{case 4} of the theorem.
\end{proof}

As follows from Lemma \ref{ST from complex-analogous to pseudo-Riem} and Remark \ref{RM holomorphic extension}, any intersection of a parallel complex surface in $\C^3$ with a generic real slice results in a parallel surface in that real slice. On the other hand, from Lemma \ref{ST from pseudo-Riem to complex-analogous} it follows that all real-analytic parallel surfaces in any of the pseudo-Euclidean spaces $\R_k^n$ are obtained in this way. In this regard, it is interesting to compare our classification result above with the classification theorems of parallel surfaces in $\R^3$ and $\R_1^3$ from \cite{ChVa}. It is worth noticing, that for the proof of Theorem \ref{ST classification hol riem}, only minor adoptions to the original proof for $\R_1^3$ were necessary.

\begin{theorem}\label{ST par in R3}
A non-degenerate parallel surface in $\R^3$ is similar to an open part of one of the following three surfaces.
\begin{enumerate}[(i)]
\item The plane $\R^2$, given by $x_3 = 0$.
\item The sphere $S^2$, given by $x_1^2+x_2^2+x_3^2=1$.
\item The flat cylinder $S^1\times \R$, given by $x_1^2+x_2^2=1$.
\end{enumerate}
\end{theorem}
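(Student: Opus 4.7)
The plan is to derive Theorem \ref{ST par in R3} as a real slice of Theorem \ref{ST classification hol riem}, ruling out the fourth holomorphic case by a signature argument on the shape operator.

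A non-degenerate parallel surface $Q \subset \R^3$ is automatically real-analytic, since the Gauss--Weingarten equations together with $\nabla h = 0$ form an analytic ODE system for the immersion. Viewing $\R^3$ as the generic real slice of $\C^3$ with $k=0$ from Example \ref{EX pseudo-Euclidean}, Lemma \ref{ST from pseudo-Riem to complex-analogous} yields a complexification $N = \C Q \subset \C^3$ on which the holomorphic extension of $\nabla h$ vanishes identically; by Remark \ref{RM holomorphic extension} this extension is the covariant derivative of the holomorphic Riemannian second fundamental form of $N$. Thus $N$ is a parallel holomorphic surface, and Theorem \ref{ST classification hol riem} places it, up to a complex similarity of $\C^3$, into one of the four models $\C^2$, $\C S^2$, $\C S^1 \times \C$, or the surface $B$ of case \eqref{case 4}.

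The next step is to rule out case \eqref{case 4}. Its proof in Theorem \ref{ST classification hol riem} begins with the shape operator in the non-diagonalizable normal form \eqref{EQ undiagonalizable} of Lemma \ref{ST forms complex matrix}, whose double eigenvalue is supported only by a null eigenvector. At any point $p \in Q$, however, the shape operator is a symmetric endomorphism with respect to the positive-definite Euclidean inner product on $T_pQ$, hence admits an orthonormal eigenbasis over $\R$; its $\C$-linear extension to $T_pN$ therefore falls in the diagonalizable form \eqref{EQ diagonalizable}. By analyticity this holds throughout $N$, so only cases \eqref{case 1}, \eqref{case 2}, \eqref{case 3} can occur.

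For the remaining three cases I would recover $Q$ in standard form by re-running the integration of the Gauss--Weingarten ODEs along $Q$ with real initial data, rather than by transporting the complex similarity from Theorem \ref{ST classification hol riem}. This is the main obstacle: that complex similarity need not preserve the real slice $\R^3$, so naively intersecting the standard complex models with $\R^3$ does not recover $Q$. However, in each of the three remaining cases the parameters $\alpha,\beta$ are real (being eigenvalues of a real shape operator) and the associated ODE system for $L$ and $e_3$ has real coefficients. Starting from an orthonormal frame $\{e_1,e_2,e_3\}\subset\R^3$ at a base point of $Q$, the solution $L(u,v)$ stays real-valued and the integration constants $\{w_1,w_2,w_3\}$ form an orthonormal basis of $\R^3$; the resulting image is then, up to a real Euclidean similarity of $\R^3$, an open part of $\{x_3 = 0\}$, the unit sphere, or the cylinder $S^1 \times \R$ respectively, as claimed.
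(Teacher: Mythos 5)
Your proposal is correct and follows the paper's overall strategy: complexify $Q$ via Lemma \ref{ST from pseudo-Riem to complex-analogous}, classify the complexification $N=\C Q$ by Theorem \ref{ST classification hol riem}, and descend back to the real slice. Where you genuinely diverge is in the two steps the paper leaves essentially unargued. To exclude case \eqref{case 4}, the paper simply asserts that the surface $B$ and all surfaces similar to it have no real $2$-dimensional intersection with $\R^3$; you instead note that the normal form \eqref{EQ undiagonalizable} is never diagonalizable, whereas the shape operator of $Q$, being symmetric for a positive-definite metric, is diagonalizable over $\R$ at every point of $Q\subset N$ --- a cleaner, pointwise obstruction. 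One small imprecision: ``by analyticity this holds throughout $N$'' is not the right justification, since diagonalizability does not propagate analytically; but your argument does not need it, because a similarity carrying $N$ onto an open part of $B$ would force the form \eqref{EQ undiagonalizable} already at the points of $Q$, which is the contradiction. To identify the three surviving cases, the paper intersects the standard complex models with $\R^3$, which tacitly assumes the complex similarity of Theorem \ref{ST classification hol riem} can be chosen compatibly with the real slice; you correctly flag this and close the gap by re-integrating the real-coefficient Gauss--Weingarten system with real initial data, using that $\alpha,\beta$ are real eigenvalues of the Euclidean shape operator. Finally, your observation that non-degenerate parallel surfaces are automatically real-analytic is a worthwhile addition, since Lemma \ref{ST from pseudo-Riem to complex-analogous} requires real-analyticity while the statement of Theorem \ref{ST par in R3} does not assume it.
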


These three cases in Theorem \ref{ST par in R3} above arise from intersecting the complex surfaces \eqref{case 1}, \eqref{case 2} and \eqref{case 3} from Theorem \ref{ST classification hol riem} with the Euclidean real slice $\R^3\subset\C^3$, given by $y_1=y_2=y_3=0$. The surface \eqref{case 4}, or any surface similar to it in $\C^3$, does not give a real 2-dimensional intersection with $\R^3$.

\begin{theorem}
A non-degenerate parallel surface in $\R_1^3$ (with metric $ds^2=-dy_1^2+dx_2^2+dx_3^2$) is similar to an open part of one of the following eight surfaces.
\begin{enumerate}[(i)]
\item The Euclidean plane $\R^2$, given by $y_1 = 0$.\label{LI Par R13 case 1}
\item The Lorentzian plane $\R_1^1$, given by $x_2 = 0$.\label{LI Par R13 case 2}
\item The hyperbolic plane $H^2$, given by $y_1^2-x_2^2-x_3^2=1$.\label{LI Par R13 case 3}
\item The indefinite sphere $S_1^2$, given by $-y_1^2+x_2^2+x_3^2=1$.\label{LI Par R13 case 4}
\item The flat cylinder $H^1\times \R^1$, given by $y_1^2-x_2^2=1$.\label{LI Par R13 case 5}
\item The flat cylinder $S^1\times \R_1^1$, given by $x_2^2+x_3^2=1$.\label{LI Par R13 case 6}
\item The flat cylinder $S_1^1\times \R^1$, given by $-y_1^2+x_2^2=1$.\label{LI Par R13 case 7}
\item The flat minimal Lorentzian surface $M_1^2$, given by $x_3 = (y_1-x_2)^2$.\label{LI Par R13 case 8}
\end{enumerate}
\end{theorem}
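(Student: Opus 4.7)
The plan is to deduce this classification directly from Theorem \ref{ST classification hol riem} by complexifying and then intersecting back with $\R_1^3$. First, any non-degenerate parallel surface $Q\subset\R_1^3$ is real-analytic: the condition $\nabla h=0$ combined with the Gauss and Weingarten formulas yields an analytic ODE system in adapted coordinates, exactly as in the proof of Theorem \ref{ST classification hol riem}. Hence by Lemma \ref{ST from pseudo-Riem to complex-analogous}, $Q$ admits a complexification $\C Q\subset\C^3$ that is a parallel holomorphic Riemannian surface, and Theorem \ref{ST classification hol riem} identifies $\C Q$, up to $\C^3$-similarity, with an open part of exactly one of the four model surfaces \eqref{case 1}--\eqref{case 4}.

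The task then becomes, for each model $N$, the enumeration of the $\R_1^3$-similarity classes of real $2$-dimensional intersections $g(N)\cap\R_1^3$ as $g$ ranges over the $\C^3$-similarity group. I would parametrize $\R_1^3$ by $(y_1,x_2,x_3)$ as in Example \ref{EX pseudo-Euclidean} (so $z_1=iy_1$, $z_2=x_2$, $z_3=x_3$) and substitute into the complex equation of $g(N)$; this yields a pair of real equations (the real and imaginary parts), and the $2$-dimensionality of the intersection forces these two equations to be proportional, which strongly constrains the admissible position of $g(N)$ relative to $\R_1^3$.

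Running through the four cases, I expect the following. The complex plane \eqref{case 1} yields cases \eqref{LI Par R13 case 1} and \eqref{LI Par R13 case 2} according to whether its real normal direction in $\R_1^3$ is timelike or spacelike. The complex sphere \eqref{case 2}, after normalizing its centre to the origin, yields $S_1^2$ and $H^2$ (cases \eqref{LI Par R13 case 4} and \eqref{LI Par R13 case 3}) depending on whether the dilation factor used to normalize the radius is $1$ or $i$. The cylinder \eqref{case 3} produces $S^1\times\R_1^1$, $S_1^1\times\R$ and $H^1\times\R$ (cases \eqref{LI Par R13 case 5}--\eqref{LI Par R13 case 7}) according to the orientation of the axis (timelike versus spacelike) and the choice of real versus imaginary radius, with the fourth combination giving an empty intersection. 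Finally $B$ in \eqref{case 4} yields only case \eqref{LI Par R13 case 8}, since its defining null direction $(1,i,0)\in\C^3$ can be aligned with a null direction of $\R_1^3$ in essentially one way modulo $\R_1^3$-similarity.

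The main obstacle is to verify that this enumeration is exhaustive, particularly for case \eqref{case 4}: unlike the plane, sphere and cylinder, the surface $B$ has a less transparent symmetry group, and one must carefully argue that the orientations obtained by combining rotations in $\mathrm{O}(3,\C)$ with dilations $L_\alpha$ and translations either reduce to case \eqref{LI Par R13 case 8} (after an $\R_1^3$-similarity such as a reflection $x_2\mapsto -x_2$ or $x_3\mapsto -x_3$) or collapse to a real $1$-dimensional set. Once this is done, together with the routine verification that the eight listed surfaces are mutually non-similar in $\R_1^3$ via invariants (signature of the induced metric, sign of the Gaussian curvature, vanishing of $h$, product structure), the classification follows; all eight cases are realised by construction, so no separate existence argument is required.
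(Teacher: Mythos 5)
Your plan --- complexify the parallel surface via Lemma \ref{ST from pseudo-Riem to complex-analogous}, reduce to the four models of Theorem \ref{ST classification hol riem}, and enumerate the two-dimensional intersections of their similar images with $\R_1^3$ up to $\R_1^3$-similarity --- is precisely the derivation the paper sketches after the theorem statement, down to the same choice of intersecting complex surfaces for each of the eight cases. The paper does not itself execute the exhaustiveness check you correctly single out as the main remaining work (it quotes the classification from \cite{ChVa} and only exhibits the correspondence), so your proposal is the same route, honestly flagged rather than fully carried out.
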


As these surfaces are all real-analytic (in fact even quadratic), they must all be obtainable from Theorem \ref{ST classification hol riem}, by taking appropriate intersections with $\R_1^3$. Cases \seqref{LI Par R13 case 1} and \seqref{LI Par R13 case 2} are obtained by intersecting the planes $z_1=0$ and $z_2=0$ (both similar to \eqref{case 1} from Theorem \ref{ST classification hol riem}) with $\R_1^3$. The cases \seqref{LI Par R13 case 3} and \seqref{LI Par R13 case 4} are obtained by intersecting with the surfaces $-z_1^2-z_2^2-z_3^2=1$ and $z_1^2+z_2^2+z_3^2=1$ (both similar to \eqref{case 2}). The cases \seqref{LI Par R13 case 5}, \seqref{LI Par R13 case 6} and \seqref{LI Par R13 case 7} are obtained by intersecting with the surfaces $-z_1^2-z_2^2=1$, $z_2^2+z_3^2=1$ and $z_1^2+z_2^2=1$ (all similar to \eqref{case 3}). Finally, \seqref{LI Par R13 case 8} is obtained by intersecting with $z_3 = (z_2+i z_1)^2$ (similar to \eqref{case 4}).

\begin{remark}
As mentioned before, the examples above are mainly chosen with the purpose of exemplifying our methods. However, there are no real obstructions to apply the same principle to deal with other problems in submanifold theory. First of all, no restrictions are being put on the dimensions of the manifolds involved. Furthermore, when it comes to the geometric property in consideration, there is only the restriction that it is derived in a holomorphic manner from the main holomorphic operators in submanifold theory. Hence, besides the properties we have seen in the examples above, other possible properties include constant mean curvature, constant sectional curvature, Einstein, quasi-minimal (marginally trapped), (semi- or pseudo-)symmetry, (semi- or pseudo-)parallelity, and many more.
\end{remark}

\bibliography{Wick_related_spaces}

\begin{thebibliography}{10}

\bibitem{An}
H.~Anciaux.
\newblock {\em Minimal submanifolds in pseudo-{R}iemannian geometry}.
\newblock World Scientific Publishing Co. Pte. Ltd., Hackensack, NJ, 2011.
\newblock With a foreword by F. Urbano.

\bibitem{Bo}
A.~Boggess.
\newblock {\em CR manifolds and the tangential {C}auchy {R}iemann complex},
  volume~1.
\newblock CRC Press, 1991.

\bibitem{BoFrVo}
A.~Borowiec, M.~Francaviglia, and I.~Volovich.
\newblock Anti-k{\"a}hlerian manifolds.
\newblock {\em Differential Geom. Appl.}, 12(3):281--289, 2000.

\bibitem{Br}
R.~Bryant.
\newblock Obtain {L}orentzian manifolds from {R}iemannian ones by {W}ick
  rotation.
\newblock MathOverflow.
\newblock http://mathoverflow.net/q/165345 (version: 2014-05-08).

\bibitem{ChOg}
B.-Y. Chen and K.~Ogiue.
\newblock On totally real submanifolds.
\newblock {\em Trans. Amer. Math. Soc.}, 193:257--266, 1974.

\bibitem{ChVa}
B.-Y. Chen and J.~Van~der Veken.
\newblock Complete classification of parallel surfaces in 4-dimensional
  {L}orentzian space forms.
\newblock {\em Tohoku Math. J.}, 61(1):1--40, 2009.

\bibitem{Do}
M.~P. do~Carmo.
\newblock {\em Riemannian geometry}.
\newblock Birkh\"auser Boston, Inc., Boston, MA, 1992.
\newblock Translated from the second Portuguese edition by Francis Flaherty.

\bibitem{DoKoPe}
J.~Dorfmeister, S.~Kobayashi, F.~Pedit, et~al.
\newblock Complex surfaces of constant mean curvature fibered by minimal
  surfaces.
\newblock {\em Hokkaido Math. J.}, 39(1):1--55, 2010.

\bibitem{DuZe}
S.~Dumitrescu and A.~Zeghib.
\newblock Global rigidity of holomorphic riemannian metrics on compact complex
  3-manifolds.
\newblock {\em Math. Ann.}, 345(1):53--81, 2009.

\bibitem{Ha}
S.~W. Hawking.
\newblock Euclidean quantum gravity.
\newblock In {\em Recent developments in gravitation}, pages 145--173.
  Springer, 1979.

\bibitem{KoNo}
S.~Kobayashi and K.~Nomizu.
\newblock {\em Foundations of differential geometry}, volume~2.
\newblock New York, 1963.

\bibitem{Ku}
R.~S. Kulkarni.
\newblock On complexifications of differentiable manifolds.
\newblock {\em Invent. Math.}, 44(1):49--64, 1978.

\bibitem{Le}
C.~LeBrun.
\newblock Spaces of complex null geodesics in complex-{R}iemannian geometry.
\newblock {\em Trans. Amer. Math. Soc.}, 278(1):209--231, 1983.

\bibitem{Mi}
H.~Minkowski.
\newblock Die grundgleichungen f{\"u}r die elektromagnetischen vorg{\"a}nge in
  bewegten k{\"o}rpern.
\newblock {\em Math. Ann.}, 68(4):472--525, 1910.

\bibitem{Po}
M.~Poincar{\'e}.
\newblock Sur la dynamique de l'{\'e}lectron.
\newblock {\em Rend. Circ. Mat. Palermo}, 21(1):129--175, 1906.

\bibitem{Sl}
K.~S{\l}uka.
\newblock On the curvature of {K}{\"a}hler--{N}orden manifolds.
\newblock {\em J. Geom. Phys.}, 54(2):131--145, 2005.

\bibitem{Wo}
I.~Van~de Woestijne.
\newblock Minimal surfaces of the {$3$}-dimensional {M}inkowski space.
\newblock In {\em Geometry and topology of submanifolds, {II} ({A}vignon,
  1988)}, pages 344--369. World Sci. Publ., Teaneck, NJ, 1990.

\bibitem{WhBr}
H.~Whitney and F.~Bruhat.
\newblock Quelques propri{\'e}t{\'e}s fondamentales des ensembles
  analytiques-r{\'e}els.
\newblock {\em Comment. Math. Helv.}, 33(1):132--160, 1959.

\bibitem{Wi}
G.~C. Wick.
\newblock Properties of bethe-salpeter wave functions.
\newblock {\em Phys. Rev.}, 96:1124--1134, Nov 1954.

\bibitem{YaAk}
K.~Yano and M.~Ako.
\newblock On certain operators associated with tensor fields.
\newblock {\em K\=odai Math. Sem. Rep.}, 20:414--436, 1968.

\end{thebibliography}
\bibliographystyle{abbrv}
\end{document}